\definecolor{darkblue}{rgb}{0,0,0.7}
\definecolor{darkred}{rgb}{0.7,0,0}
\newtheorem{theorem}{Theorem}[section]
\newtheorem{corollary}[theorem] {Corollary}
\newtheorem{lemma}[theorem]{Lemma}
\newtheorem{proposition}[theorem]{Proposition}
\theoremstyle{definition}
\newtheorem{example}[theorem]{Example}
\newtheorem*{problem*}{Problem}
\newtheorem{remark}[theorem]{Remark}
\tikzset{edgee/.style = {->,> = latex'}}
\newcolumntype{P}[1]{>{\centering\arraybackslash}p{#1}}
\newcolumntype{M}[1]{>{\centering\arraybackslash}m{#1}}
\newcommand{\G}{\mathcal{G}}
\newcommand{\B}{\mathcal{B}}
\begin{document}

\title{Dyck paths, binary words, and Grassmannian permutations avoiding an increasing pattern}

\author{Krishna Menon}
\address{Department of Mathematics, Chennai Mathematical Institute, India}
\email{krishnamenon@cmi.ac.in}

\author{Anurag Singh}
\address{Department of Mathematics, Indian Institute of Technology (IIT) Bhilai, India}
\email{anurags@iitbhilai.ac.in}

\keywords{Grassmannian permutations, pattern avoidance, Catalan numbers, ballot numbers, Dyck paths}
\subjclass{05A05, 05A15, 05A19}
\maketitle

\begin{abstract}
    A permutation is called {\it Grassmannian} if it has at most one descent. 
    The study of pattern avoidance in such permutations was initiated by Gil and Tomasko in 2021. 
    We continue this work by studying Grassmannian permutations that avoid an increasing pattern. 
    In particular, we count the Grassmannian permutations of size $m$ avoiding the identity permutation of size $k$, thus solving a conjecture made by Weiner. 
    We also refine our counts to special classes such as odd Grassmannian permutations and Grassmannian involutions. 
    We prove most of our results by relating Grassmannian permutations to Dyck paths and binary words.
\end{abstract}

\section{Introduction}

For $n\geq 1$, let $\pi =\pi_1\cdots \pi_n$ be the one-line representation of a permutation of the set $[n]=\{1,\dots,n\}$. 
For $n \geq m \geq 1$, a permutation $\sigma= \sigma_1\cdots \sigma_n$ {\it contains} a permutation (or pattern) $\pi=\pi_1\cdots \pi_m$ if there exists a subsequence $1\leq h(1)< h(2)< \dots < h(m)\leq n$ such that for any $i,j\in[m]$, $\sigma_{h(i)}<\sigma_{h(j)}$ if and only if $\pi_i<\pi_j$. 
We say that the permutation $\sigma$ {\itshape avoids} $\pi$ if it does not contain $\pi$. 

The study of pattern avoidance in permutations was initiated by Knuth \cite{knu}, and the work of Simion and Schmidt \cite{SimSch}  was the first one to focus solely on enumerative results.  Since then many authors have studied pattern avoidance for various combinatorial objects. This includes the study of pattern avoidance in binary trees \cite{bintree2, bintree}, rooted forests \cite{genforests, forests},  circular permutations \cite{ callan, vella}, Dyck paths \cite{dyckpaths}, set partitions \cite{setavoidance2, setavoidance1} and compositions \cite{mansourbook}. Pattern avoidance has also been studied for its applications to algebraic geometry (see, for example \cite{TE2018, sara, JZJ2022, Christian20225, WooYong}).

Very recently, Gil and Tomasko \cite{grass, grassodd} initiated the study of pattern avoidance in Grassmannian permutations. 
In particular, they have shown that all non-identity Grassmannian permutations are Wilf-equivalent and obtained expressions for the number of Grassmannian permutations of a given size avoiding a non-identity Grassmannian permutation. 
They also count the number of Grassmannian permutations of size $2k - 2$ and $2k - 3$ that avoid the identity permutation of size $k$ (denoted $\operatorname{id}_k$). 
In this article, we build on their work by studying Grassmannian permutations of arbitrary size avoiding $\operatorname{id}_k$. 
We also refine our results to some special classes of Grassmannian permutations.

The outline of the article is as follows. 
In \Cref{prelim}, we describe a convenient representation of Grassmannian permutations using binary words that will help in proving results in the sequel. 
In \Cref{countsec}, we count the number of Grassmannian permutations of size $m$ avoiding $\operatorname{id}_k$. 
We first do this using binary words and recursions, thus proving a conjecture by Michael Weiner (see \Cref{count}). 
We also use Dyck paths to obtain a different expression for these numbers. 
We then refine this result in \Cref{parsec} by studying avoidance of an increasing pattern in odd and even Grassmannian permutations. 
Finally, we obtain similar counts for other classes of Grassmannian permutations in \Cref{othersec}.

\section{Preliminaries}\label{prelim}

We denote the set of Grassmannian permutations of $[n]$ by $\G_n$. 
We use binary words to encode these permutations. 
Let $w = w_1 w_2 \cdots w_n$ be a binary word of length $n$. 
We construct the Grassmannian permutation $G(w)$ as follows. 
If $A = \{i \in [n] \mid w_i = 0\}$ is a set of size $k$, then we set the first $k$ terms of $G(w)$ to be those of $A$ listed in increasing order. 
The remaining $n - k$ terms are those of $[n] \setminus A$ listed in increasing order.

\begin{example}
The Grassmannian permutation associated with the binary word $0^3101^30 = 000101110$ is $123594678 \in \G_9$.
\end{example}

The following result is an immediate consequence of the definitions.

\begin{proposition}
    Each permutation in $\G_n$ is of the form $G(w)$ for some binary word $w$ of length $n$. 
    This representation is unique for any non-identity permutation, and the binary words that correspond to the identity permutation are those of the form $0^j1^{n - j}$ for $j \in [0, n]$.
\end{proposition}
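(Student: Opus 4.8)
The plan is to verify all three assertions directly from the construction of $G(w)$. The key structural observation I would record first is that $G(w)$ is, by definition, the concatenation of two increasing blocks: the elements of $A = \{i \in [n] \mid w_i = 0\}$ in increasing order, followed by the elements of $[n] \setminus A$ in increasing order. Consequently $G(w)$ has at most one descent, which can occur only at the boundary position $k = |A|$, and it occurs there precisely when $\max A > \min([n] \setminus A)$, equivalently when $A \neq \{1, \dots, k\}$. This single fact drives the rest of the argument, and in particular it confirms that $G(w)$ really is Grassmannian.

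For surjectivity, I would start from an arbitrary $\sigma \in \G_n$. Since $\sigma$ has at most one descent, either it is increasing or it has a unique descent at some position $d$, so that $\sigma_1 < \cdots < \sigma_d$ and $\sigma_{d+1} < \cdots < \sigma_n$. Setting $A = \{\sigma_1, \dots, \sigma_d\}$ (and $A = \{\sigma_1, \dots, \sigma_n\}$ in the increasing case) and letting $w$ be the word with $w_i = 0$ exactly when $i \in A$, the two blocks read off by the construction are $A$ and $[n] \setminus A$, each in increasing order. Because the corresponding portions of $\sigma$ were already increasing, this reproduces $\sigma$ term by term, giving $G(w) = \sigma$.

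For the identity and uniqueness claims I would invoke the structural observation again. The word $w$ yields the identity iff the two increasing blocks concatenate into a single increasing sequence, which forces every element of $A$ to precede every element of its complement, i.e. $A = \{1, \dots, j\}$ with $j = |A|$; this is exactly the condition $w = 0^j 1^{n-j}$, and conversely each such word plainly gives the identity. If instead $\sigma = G(w)$ is not the identity, then it has a genuine descent, which by the structural observation sits at position $|A|$; thus $|A|$ is recovered as the descent position of $\sigma$, and then $A = \{\sigma_1, \dots, \sigma_{|A|}\}$ as the set of entries before the descent, so $w$ is uniquely determined. The only real subtlety — and the source of the non-uniqueness at the identity — is precisely that the boundary position $|A|$ becomes invisible as a descent exactly when $A$ is an initial segment; once this is isolated, everything else is routine bookkeeping.
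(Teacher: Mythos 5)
Your proof is correct; the paper states this proposition without proof, calling it an immediate consequence of the definitions, and your argument simply spells out the intended details. The structural observation (a descent can occur only at position $|A|$, and does so exactly when $A$ is not an initial segment) is precisely the right way to make surjectivity, the identity characterization, and uniqueness all routine.
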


As a warm-up exercise, we count the Grassmannian permutations according to the number of fixed points. 
A fixed point in a permutation $\pi$ of $[n]$ is a number $i \in [n]$ such that $\pi(i) = i$.

\begin{proposition}
    The number of Grassmannian permutations of length $n$ with $k$ fixed points is $1$ if $k = n$, $0$ if $k = n - 1$ and $(k + 1)2^{n - k - 2}$ otherwise.
\end{proposition}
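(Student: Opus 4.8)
The plan is to translate the notion of a fixed point of $G(w)$ into a simple condition on the binary word $w$, and then count words. First I would determine, for a word $w = w_1 \cdots w_n$ whose zero-set $A = \{i \mid w_i = 0\}$ has size $j$, exactly which positions are fixed. Writing $G(w)(i)$ as the $i$-th smallest element of $A$ when $i \le j$, and as the $(i-j)$-th smallest element of $[n] \setminus A$ when $i > j$, a short computation shows that $i$ is a fixed point precisely when $w_1 = \cdots = w_i = 0$ in the first case, and precisely when $w_i = \cdots = w_n = 1$ in the second. In other words, position $i$ is fixed if and only if $w$ begins with at least $i$ zeros or ends with at least $n - i + 1$ ones.

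Next I would introduce $a$, the length of the maximal prefix of $0$'s of $w$, and $b$, the length of the maximal suffix of $1$'s. By the characterization, the set of fixed points is exactly $\{1, \ldots, a\} \cup \{n - b + 1, \ldots, n\}$. When $w = 0^a 1^{n-a}$ is one of the identity words, these two blocks tile $[n]$ and give $n$ fixed points, which accounts for the value $1$ at $k = n$. For every other word the two blocks are disjoint, so the number of fixed points equals $a + b$; here I would check that $a + b \le n - 2$ always holds, since $a + b = n - 1$ would force the letters $w_{a+1} = 1$ and $w_{n-b} = 0$ into the same position, a contradiction. This immediately yields the value $0$ at $k = n - 1$.

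For $0 \le k \le n - 2$ I would count the non-identity words with $a + b = k$; by the uniqueness part of the preceding proposition these are in bijection with the Grassmannian permutations having exactly $k$ fixed points. Summing over the $k + 1$ choices of $(a, b)$ with $a + b = k$, each such choice forces $w_1 = \cdots = w_a = 0$, $w_{a+1} = 1$, $w_{n-b} = 0$, and $w_{n-b+1} = \cdots = w_n = 1$, leaving the $n - k - 2$ middle positions free. This produces $(k+1)\,2^{n-k-2}$ words, matching the claimed formula.

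The step I expect to be the main obstacle is pinning down the fixed-point characterization exactly and handling the boundary cases cleanly: in particular, keeping the many-to-one identity words separate from the bijective non-identity case, verifying that the forced positions never collide when $a + b \le n - 2$, and confirming that $k = n - 1$ genuinely cannot occur rather than merely being uncommon.
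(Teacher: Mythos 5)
Your proposal is correct and follows essentially the same route as the paper: both decompose a non-identity word as $0^a 1 w' 0 1^b$ (maximal zero-prefix of length $a$, maximal one-suffix of length $b$, free middle of length $n-k-2$), identify the fixed points as $\{1,\dots,a\}\cup\{n-b+1,\dots,n\}$, and count the $k+1$ choices of $(a,b)$ times $2^{n-k-2}$ middle words. Your write-up just makes explicit the fixed-point characterization and the verification that $a+b\le n-2$, which the paper leaves as "it can be checked."
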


\begin{proof}
The case $k = n$ corresponds to the identity permutation, and it is clear that no permutation can have exactly $n - 1$ fixed points. 
Let $w$ be a binary word not of the form $0^i1^{n - i}$ for $i \in [0, n]$. 
This means that there is at least one $1$ that appears before a $0$. 
Hence, $w$ is of the form $0^a 1 w' 0 1^b$ for some $a, b \geq 0$ and binary word $w'$ of length $n - (a + b + 2)$. 
From the way the permutation $G(w)$ is defined, it can be checked that the number of fixed points is $a + b$, which are $\{1, 2, \ldots, a, n - b + 1, n - b + 2, \ldots, n\}$. 
So, if we want $G(w)$ to have $k$ fixed points, we have to choose $a \in [0, k]$ and a binary word $w'$ of length $n - k - 2$. 
This gives us the required result.
\end{proof}

We now turn to pattern avoidance. 
We say a binary word $w'$ \emph{contains} a binary word $w$ if it contains $w$ as a subsequence. 
We say a binary word $w'$ \emph{avoids} $w$ if it does not contain $w$.

\begin{example}
The binary word $01001101100$ contains the pattern $1100$. 
One such instance is indicated in the following:
$0{\color{blue}\underline{1}}00{\color{blue}\underline{1}}1{\color{blue}\underline{0}}11{\color{blue}\underline{0}}0$. 
It avoids the pattern $001001$ since any pair of $1$s that have at least two $0$s between them must use the first $1$.
\end{example}

It can be checked that pattern avoidance in Grassmannian permutations translates to binary words as described in the following proposition.

\begin{proposition}\label{bincont}
    If $G(w)$ is not the identity permutation, $G(w')$ contains $G(w)$ if and only if $w'$ contains $w$. 
    If $G(w)$ is the identity permutation of size $k$, then $G(w')$ contains $G(w)$ if and only if $w'$ contains $0^j1^{k - j}$ for some $j \in [0, k]$.
\end{proposition}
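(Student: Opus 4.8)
The plan is to exploit the defining feature of the encoding $G$: for a binary word $u$ of length $\ell$ with $t$ zeros, the letter $u_v$ records whether the value $v$ is placed in the \emph{first block} of $G(u)$ (the increasing run occupying positions $1,\dots,t$) or in the \emph{second block} (positions $t+1,\dots,\ell$). Concretely, $u_v=0$ precisely when $v$ appears among the first $t$ entries of $G(u)$ and $u_v=1$ precisely when it appears among the last $\ell-t$ entries, and within each block the values increase. Throughout, write $\sigma=G(w')$ and $\pi=G(w)$, and let $K$ and $k$ denote the numbers of zeros in $w'$ and $w$.

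For the forward implication, valid for every $w$ whether or not $G(w)$ is the identity, suppose $w'$ contains $w$ via positions $p_1<\cdots<p_n$ with $w'_{p_i}=w_i$. I would take the subsequence of $\sigma$ consisting of the values $p_1,\dots,p_n$ and read it from left to right. Since each block of $\sigma$ is increasing, these values appear in $\sigma$ as $\{p_i:w_i=0\}$ in increasing order followed by $\{p_i:w_i=1\}$ in increasing order; as $i\mapsto p_i$ is order preserving, this left-to-right reading equals the image of the one-line word of $\pi$ under an order-preserving map, hence is order-isomorphic to $\pi$. Thus $\sigma$ contains $\pi$.

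The hard part is the reverse implication in the non-identity case, and this is where the single-descent structure is essential. Here $\pi$ has exactly one descent, located at the block boundary $k$, so $\pi_k>\pi_{k+1}$. Given an occurrence of $\pi$ in $\sigma$ at positions $q_1<\cdots<q_n$, the descent forces $\sigma_{q_k}>\sigma_{q_{k+1}}$; since $\sigma$ is increasing within each of its two blocks, a descent along a subsequence can only arise by crossing the block boundary, so $q_k\le K<q_{k+1}$. I would then conclude that $q_1,\dots,q_k$ all lie in the first block of $\sigma$ and $q_{k+1},\dots,q_n$ in the second, whence $\sigma_{q_1},\dots,\sigma_{q_k}$ are values whose $w'$-letter is $0$ and $\sigma_{q_{k+1}},\dots,\sigma_{q_n}$ are values whose $w'$-letter is $1$. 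Setting $p_i$ to be the value of $\sigma$ matched to the pattern value $i$ under the occurrence (so that the $p_i$ increase with $i$), the block in which $i$ sits inside $\pi$, determined by $w_i$, agrees with the block in which $p_i$ sits inside $\sigma$; this yields $w'_{p_i}=w_i$ for all $i$, so $w'$ contains $w$.

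For the identity case, $G(w)=\operatorname{id}_k$ is the increasing pattern $12\cdots k$, so $\sigma$ contains it iff $\sigma$ has an increasing subsequence of length $k$. Given such a subsequence at positions $q_1<\cdots<q_k$, letting $j$ count those $q_i$ in the first block of $\sigma$, the same block analysis shows the matched increasing values form positions in $w'$ that read $0^j1^{k-j}$; conversely the forward implication already gives that containing $0^j1^{k-j}$ produces an occurrence of $G(0^j1^{k-j})=\operatorname{id}_k$. The only routine verifications remaining are that the descent of a non-identity $G(w)$ always sits at the block boundary, and that a subsequence exhibiting a descent of $\sigma$ must cross its block boundary; both are immediate from the increasing-block description above.
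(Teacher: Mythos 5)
Your proof is correct and is exactly the verification the paper leaves implicit (the paper only says ``it can be checked''): both directions rest on the observation that $w_v$ records which increasing block of $G(w)$ the value $v$ lies in, and that the unique descent of a non-identity Grassmannian pattern must be matched across the block boundary of $G(w')$. All the key points are present, including the order-preserving relabeling $i\mapsto p_i$ in the forward direction and the initial-segment argument that pins down $j$ in the identity case.
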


For any binary word $w$, denote by $\G_n(w)$ the set of Grassmannian permutations of length $n$ that avoid $G(w)$. 
We reprove a result from \cite{grass} using the language of binary words.

\begin{theorem}[{\cite[Theorem 3.3]{grass}}]
    If $w$ is a binary word of length $k$ and $G(w)$ is not the identity permutation,
    \begin{equation*}
        |\G_n(w)| = 1 + \sum_{j = 2}^{k - 1} \binom{n}{j}.
    \end{equation*}
\end{theorem}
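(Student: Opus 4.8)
The plan is to transfer the whole problem to binary words via \Cref{bincont} and then to prove a clean, self-contained enumeration of binary words avoiding a fixed pattern as a subsequence. For a binary word $w$ of length $k$, let $a(w,n)$ denote the number of binary words of length $n$ that avoid $w$. Since $G(w)$ is not the identity, $w$ contains a $1$ occurring before a $0$, so none of the $n+1$ monotone words $0^j1^{n-j}$ (for $j \in [0,n]$) can contain $w$; in particular every one of them avoids $w$, and the identity permutation (which equals $G(0^j1^{n-j})$) avoids $G(w)$. By \Cref{bincont}, the non-identity permutations $G(w')$ avoiding $G(w)$ correspond bijectively to the length-$n$ binary words $w'$ that avoid $w$ and are not monotone. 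Counting these and adding back the identity gives
\[
|\G_n(w)| = 1 + \bigl(a(w,n) - (n+1)\bigr) = a(w,n) - n,
\]
so it suffices to prove that $a(w,n) = \sum_{j=0}^{k-1}\binom{n}{j}$.

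Next I would establish the recursion $a(w,n) = a(w,n-1) + a(w_1\cdots w_{k-1},\, n-1)$, valid for every $w$ of length $k \geq 1$. Splitting a length-$n$ avoider $u = u_1\cdots u_n$ according to its last letter, the prefix $u_1\cdots u_{n-1}$ is always an avoider of $w$. Conversely, given a length-$(n-1)$ avoider $u'$, appending the letter $\overline{w_k}$ (the complement of the last letter of $w$) can never create an occurrence of $w$, since any such occurrence could not use the final position and would hence already live in $u'$; appending $w_k$, on the other hand, yields an avoider exactly when $u'$ avoids the prefix $w_1\cdots w_{k-1}$. As avoiding $w_1\cdots w_{k-1}$ is stronger than avoiding $w$, tallying the two kinds of extensions of each $w$-avoider produces the stated recursion.

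With the recursion in hand I would prove $a(w,n) = \sum_{j=0}^{k-1}\binom{n}{j}$ by induction on $k$. The crucial point is that the right-hand side depends on $w$ only through its length, so the recursion — whose inhomogeneous term involves the length-$(k-1)$ prefix — closes up without ever having to track \emph{which} word of length $k-1$ occurs; this is precisely why the count is identical for all patterns of a given length, and no separate Wilf-equivalence input is needed. The base cases $k=0$ (no word avoids the empty pattern) and $k=1$ (only the constant word $\overline{w_1}^{\,n}$ avoids a single letter) are immediate, and using $\binom{n}{j} = \binom{n-1}{j} + \binom{n-1}{j-1}$ together with the boundary value $a(w,0) = 1$ the inductive step is a routine telescoping. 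Substituting into $|\G_n(w)| = a(w,n) - n$ and peeling off the $j = 0$ and $j = 1$ terms yields $1 + \sum_{j=2}^{k-1}\binom{n}{j}$, as claimed.

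I expect the main obstacle to be the careful justification of the recursion — specifically the assertion that appending $w_k$ to a $w$-avoider $u'$ produces a $w$-avoider precisely when $u'$ avoids the prefix $w_1\cdots w_{k-1}$ — since this is exactly where one must reason about whether a putative occurrence of $w$ uses the appended final letter. The bookkeeping in the first step (that exactly the $n+1$ monotone words are discarded, and the identity then added back) is the other place an off-by-one slip could enter, so I would sanity-check it against $k=2$, where the formula correctly gives $|\G_n(w)| = 1$ (only the identity avoids the single descent $21$), and against $k=3$.
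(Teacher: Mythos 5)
Your proposal is correct, but the core counting step is done by a genuinely different method than the paper's. The reduction is identical: both you and the paper pass to binary words via \Cref{bincont}, observe that all $n+1$ monotone words $0^j1^{n-j}$ avoid $w$ but collapse to the single identity permutation, and arrive at $|\G_n(w)| = a(w,n) - n$ where $a(w,n)$ counts length-$n$ binary avoiders of $w$. Where you diverge is in evaluating $a(w,n)$: the paper gives a one-shot canonical-form decomposition, classifying each avoider $v$ by the largest $j$ for which $v$ contains the prefix $w_1\cdots w_j$ and reading off the leftmost occurrence to write $v$ uniquely as $(1-w_1)^{i_1}w_1\cdots w_j(1-w_{j+1})^{i_{j+1}}$, which immediately yields $\binom{n}{j}$ words for each $j$ and hence $\sum_{j=0}^{k-1}\binom{n}{j}$. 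You instead prove the last-letter recursion $a(w,n) = a(w,n-1) + a(w_1\cdots w_{k-1},n-1)$ and close it by double induction on $k$ and $n$ using Pascal's rule. Your recursion is sound (the two cases by last letter are disjoint and exhaustive, and the ``appending $w_k$ is safe iff the prefix $w_1\cdots w_{k-1}$ is avoided'' claim is exactly right since avoiding the prefix implies avoiding $w$), and your observation that the recursion's right-hand side depends only on the length of the pattern cleanly explains the Wilf-equivalence without extra input. What each approach buys: the paper's decomposition is bijective and exhibits the binomial coefficients directly as choices of positions, while yours is more mechanical but entirely self-contained and would generalize readily to larger alphabets, where the same recursion (with the inhomogeneous term indexed by the prefix) still holds.
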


\begin{proof}
The proof is in the same lines as that of \cite[Proposition 3.22]{4k}. 
By \Cref{bincont}, we have to count the binary words $v$ of length $n$ that do not contain the subsequence $w = w_1 w_2 \cdots w_k$. 
Suppose $j \in [0, k - 1]$ is the largest number such that $v$ contains $w_1 w_2 \cdots w_j$. 
Then, using the \emph{left most} occurrence of $w_1w_2 \cdots w_j$, we can see that $v$ must be of the form
$$(1-w_1)^{i_1}\ w_1\ (1-w_2)^{i_2}\ w_2\ \cdots\ (1-w_j)^{i_j}\ w_j\ (1 - w_{j+1})^{i_{j+1}},$$
where $(i_1, \dots , i_{j+1})$ is a sequence in $\mathbb{Z}_{\geq 0}$ and $i_1 + \cdots + i_{j+1} = n - j$. 
There are $\binom{n}{j}$ such words of length $n$. 
Since all words corresponding to the identity permutation avoid $w$, removing the over-counting, we get
\begin{equation*}
    |\G_n(w)| = \sum_{j = 0}^{k - 1} \binom{n}{j} - n = 1 + \sum_{j = 2}^{k - 1} \binom{n}{j}.
\end{equation*}
\end{proof}

As we make use of Dyck paths in the sequel, we now set up relevant notations. 
A \emph{Dyck path of semilength $n$} is a lattice path that starts at the origin, ends at $(2n, 0)$, has steps $U = (1, 1)$ and $D = (1, -1)$, and never falls below the $x$-axis. 
A \emph{peak} in a Dyck path is an up-step immediately followed by a down-step. 
The height of a peak is the $y$-coordinate of the point at the end of its up-step. 
Similarly, a \emph{valley} is a down-step immediately followed by an up-step and its height is the $y$-coordinate of the point at the end of its down-step. 
The number of Dyck paths of semilength $n$ is the $n^{th}$ Catalan number (for example, see \cite{sta_cat}) given by
\begin{equation*}
    C_n = \frac{1}{n + 1}\binom{2n}{n}.
\end{equation*}

For any $n \geq 1$ and $k \geq 0$, the number of Dyck paths of semilength $n + 1$ whose last peak is of height $n + 1 - k$ is given by
\begin{equation*}
    T(n, k) = \frac{n - k + 1}{n + 1} \cdot \binom{n + k}{n}.
\end{equation*}
A proof of this can be found, for example, in \cite{aval2008multivariate}. 
These numbers are sometimes called the \emph{ballot numbers} and are listed as \cite[\href{https://oeis.org/A009766}{A009766}]{oeis}.

\section{Counting Grassmannian permutations avoiding $\operatorname{id}_k$}\label{countsec}

In this section, we prove an expression conjectured by Michael Weiner, stated in \cite[Page 4]{grass}, for the number of Grassmannian permutations of size $m$ avoiding $\operatorname{id}_k$. 
We do this using binary words and recursions. 
We also obtain a different expression for the same numbers by representing the binary words as Dyck paths.

\begin{theorem}[{\cite[Conjecture]{grass}}]\label{count}
    For any $k \geq 2$ and $m \in [k, 2k - 2]$, we have
    \begin{equation}\label{conj}
        |\G_m(0^k)| = \sum_{j = 1}^{2k - m}(-1)^{j - 1} j \cdot \binom{2k - m - j}{j} \cdot C_{k - j}.
    \end{equation}
\end{theorem}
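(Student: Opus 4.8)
The plan is to convert the statement into a purely combinatorial problem about binary words, count those words by a single application of the reflection principle, and then match the resulting expression to Weiner's formula through an algebraic identity.

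First I would use \Cref{bincont} to replace permutations by binary words. Since $G(0^k) = \operatorname{id}_k$, \Cref{bincont} says that $G(w')$ avoids $\operatorname{id}_k$ exactly when $w'$ avoids every word $0^j 1^{k-j}$ with $j \in [0,k]$. As these are precisely the weakly increasing binary words of length $k$, the condition is that $w'$ has no weakly increasing subsequence of length $k$. For $m \ge k$ every identity word $0^a 1^{m-a}$ violates this (it is itself weakly increasing of length $m \ge k$), so every avoiding word is non-identity, and $w' \mapsto G(w')$ is then a bijection between length-$m$ avoiding words and $\G_m(0^k)$. Hence $|\G_m(0^k)|$ equals the number $f(m,k)$ of length-$m$ binary words whose longest weakly increasing subsequence has length at most $k-1$.

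Next I would fix the number $p$ of $1$s (so there are $q = m-p$ zeros) and read each word as a lattice path starting at the origin with $0 \mapsto U = (1,1)$ and $1 \mapsto D = (1,-1)$. In a binary word a weakly increasing subsequence is a block of $0$s followed by a block of $1$s, and a short computation shows the longest such subsequence has length $p + \max_{0 \le t \le m} S_t$, where $S_t$ is the height of the path after $t$ steps. The avoidance condition therefore becomes the single upper barrier $\max_t S_t \le k-1-p$. Because there is only an upper barrier, the reflection principle counts the admissible paths (whose endpoint $q-p$ is fixed) as a difference of two binomials: the reflected paths are exactly the length-$m$ paths ending at height $2k-m$, of which there are $\binom{m}{k}$. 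Thus for $p$ in the feasible range $m-k+1 \le p \le k-1$ the count is $\binom{m}{p} - \binom{m}{k}$, and it is $0$ otherwise, so that
\begin{equation*}
    f(m,k) = \sum_{p=m-k+1}^{k-1}\left(\binom{m}{p} - \binom{m}{k}\right) = \sum_{p=m-k+1}^{k-1}\binom{m}{p} - (2k-m-1)\binom{m}{k}.
\end{equation*}

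Finally I would show that this binomial expression equals the right-hand side of \eqref{conj}. Writing $r = 2k-m \in [2,k]$, this is an identity between a windowed sum of binomial coefficients and an alternating, linearly weighted sum of Catalan numbers, and I expect it to be the main obstacle. I would prove it by induction on $r$: the base case $r=2$ has the single term $\binom{2k-2}{k-1} - \binom{2k-2}{k} = C_{k-1}$, the standard Catalan identity, matching the $j=1$ term of \eqref{conj}; for the inductive step I would pass from $r$ to $r-1$ using Pascal's rule on the binomial side and the Catalan recurrence on the other, carefully tracking the weights $j\binom{r-j}{j}$. An alternative, closer to the ``recursions'' mentioned in the introduction, is to bypass the binomial form and peel off a letter of an avoiding word: a word ending in $1$ (equivalently, by the reverse--complement symmetry of the pattern family $\{0^j1^{k-j}\}_j$, a word starting with $0$) forces the remaining letters to have no weakly increasing subsequence of length $k-1$ and contributes $f(m-1,k-1)$, while the words ending in $0$ require an auxiliary count, after which one verifies that \eqref{conj} satisfies the induced recursion. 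In either approach the combinatorial reduction and the reflection count are routine, and the genuine work lies in the Catalan bookkeeping of the final identity.
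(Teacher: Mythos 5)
Your reduction to binary words and the reflection-principle count are both correct, and they take a genuinely different (and in places slicker) route than the paper: you obtain the closed form $\sum_{a=1}^{2k-m-1}\bigl[\binom{m}{k-a}-\binom{m}{k}\bigr]$ in one stroke by conditioning on the number of $1$s and imposing a single upper barrier, whereas the paper reaches this same expression only in a later subsection, via a bijection with Dyck paths recording the first and last peak heights together with a refined ballot-number lemma. Your observation that the longest weakly increasing subsequence of a word with $p$ ones equals $p+\max_t S_t$ is correct and is a nice shortcut to that formula.

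The gap is the last step. The identity
\begin{equation*}
\sum_{a=1}^{2k-m-1}\left[\binom{m}{k-a}-\binom{m}{k}\right]\;=\;\sum_{j=1}^{2k-m}(-1)^{j-1}\,j\binom{2k-m-j}{j}C_{k-j}
\end{equation*}
is not routine bookkeeping; once the left-hand side is established, this identity \emph{is} \Cref{count}, and your induction on $r=2k-m$ is only gestured at. Note that with $k$ fixed, decreasing $r$ increases $m$, so the binomials $\binom{m}{k-a}$ change along with the summation range, and it is not at all obvious how Pascal's rule and the Catalan convolution mesh with the weights $j\binom{r-j}{j}$. The paper does not prove this identity directly either: it runs the recursion $B(k,m)=B(k,m-1)+B(k-1,m-1)-T(k-1,m-k)$, where the correction term is the ballot number counting words of $\B(k,m-1)$ with $k-1$ zeros, and then verifies that the right-hand side of \eqref{conj} satisfies the same recursion. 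That verification requires two nontrivial inputs: Lang's identity \eqref{balascat} expressing $T(a,b)$ as an alternating Catalan sum, and a separate generating-function computation for the boundary value $A(k,0)=1$. Your fallback plan of peeling off the last letter is exactly this recursion --- the ``auxiliary count'' for words ending in $0$ is precisely $T(k-1,m-k)$ --- so you would still need \Cref{balascatlem} or an equivalent to close the argument. Indeed, the paper's concluding remarks explicitly pose a direct proof of these identities as an open problem, so ``the genuine work lies in the Catalan bookkeeping'' is an understatement: that bookkeeping is the theorem, and your proposal does not yet contain it.
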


We note that the terms on the right-hand side of the above equation for those $j$ where $2k - m - j < j$ are $0$, and hence the sum only runs over $j \in [1, k - \lfloor m/2 \rfloor]$. 
But we write the sum as above to make the expressions in the following computations easier to read.

% For any $k \geq 1$ and $m \geq 0$, we set $\B(k, m)$ to be the set of binary words of length $m$ that avoid $0^j1^{k - j}$ for all $j \in [0, k]$. 
% We denote the cardinality of $\B(k, m)$ by $B(k, m)$. 
For $m \geq 0$, let $\mathcal{B}(m)$ denote the set of binary words of length $m$. 
Set $\mathcal{B}(0, m) = \emptyset$, and for $k \geq 1$, let
$$\mathcal{B}(k, m) = \{w \in \mathcal{B}(m) : w \text{ avoids } 0^j1^{k-j} \text{ for all } j \in [0, k]\}.$$
Moreover, we let $B(k, m)$ denote the cardinality of $\mathcal{B}(k, m)$.

Note that if $m \geq k$, for any $i \in [0, m]$, $0^i1^{m - i}$ contains $0^j1^{k - j}$ for some $j \in [0, k]$. 
Hence, the results in \Cref{prelim} imply that $B(k, m) = |\G_m(0^k)|$ for $m \geq k \geq 2$. 
We show that $B(k, m)$ is defined by the following recurrence:
\begin{enumerate}[label=(\roman*)]
    \item $B(0, m) = 0$ for all $m \geq 0$ and $B(k, 0) = 1$ for all $k \geq 1$.

    \item If $m \geq 2k - 1$, then $B(k, m) = 0$.

    \item For all other values of $k, m \geq 1$, we have
    \begin{equation*}
        B(k, m) = B(k, m - 1) + B(k - 1, m - 1) - T(k - 1, m - k).
    \end{equation*}
\end{enumerate}

The first two points are easy to see. 
We now prove the point (iii). 
Let $w = w_1 w_2 \cdots w_m$ be a binary word of length $m$. 
Set $w' =  w_1 w_2 \cdots w_{m - 1}$. 
We have the following:
\begin{itemize}
    \item $w \in \B(k, m)$ with $w_m = 1$ if and only if $w' \in \B(k - 1, m - 1)$.
    
    \item $w \in \B(k, m)$ with $w_m = 0$ if and only if $w' \in \B(k ,m - 1)$ and does not have $(k - 1)$ $0$s.
\end{itemize}

Binary words in $\B(k, m - 1)$ that have $(k - 1)$ $0$s are of the form
\begin{equation*}
    1^{a_{k - 1}} 0 1^{a_{k - 2}} 0 \cdots 1^{a_1} 0
\end{equation*}
for some sequence $(a_1, \ldots, a_{k - 1})$ in $\mathbb{Z}_{\geq 0}$ such that $a_1 + \cdots + a_i \leq i$ for all $i \in [k - 1]$ and $a_1 + \cdots + a_{k - 1} = m - k$. 
Associating the Dyck path given by
\begin{equation*}
    U D^{a_1} U D^{a_2} \cdots D^{a_{k - 1}} U D^{2k - m}
\end{equation*}
to such a sequence shows that they are counted by $T(k - 1, m - k)$. 
This gives us the required recursion.

For any $k, m \geq 0$, we define $A(k, m)$ to be the right-hand side of \eqref{conj}, i.e.,
\begin{equation*}
    A(k, m) = \sum_{j = 1}^{2k - m}(-1)^{j - 1} j \cdot \binom{2k - m - j}{j} \cdot C_{k - j}.
\end{equation*}
We show that it satisfies the same recurrence as $B(k, m)$. 
The fact that $A(0, m) = 0$ for all $m \geq 0$ and that $A(k, m) = 0$ if $m \geq 2k$ follows from the definition of $A(m, k)$. 
To prove that $A(k, 0) = 1$ for all $k \geq 1$, we have to show that for all $k \geq 1$,
\begin{equation*}
    \sum_{j = 1}^k (-1)^{j - 1} j \cdot \binom{2k - j}{j} \cdot C_{k - j} = 1.
\end{equation*}
Simplifying the summands and re-indexing, this is equivalent to proving that
\begin{equation*}
    \sum_{j = 0}^n (-1)^{j} \binom{n + 1}{n - j}\binom{n + j + 1}{j} = (-1)^n
\end{equation*}
for any $n \geq 0$. 
This can be proved using the binomial theorem for negative powers by considering the generating function equality
\begin{equation*}
    (1 + x)^{n + 1} \cdot (1 + x)^{-(n + 2)} = \frac{1}{1 + x}.
\end{equation*}

Proving the analogue of point (iii) for $A(k, m)$ boils down to proving that
\begin{equation*}
    T(k - 1, m - k) = \sum_{j = 0}^{2k - m - 1} (-1)^{j} \binom{2k - m - 1 - j}{j} \cdot C_{k - 1 - j}
\end{equation*}
for all $k \geq 1$ and $m \in [2k - 1]$. 
This is a consequence of the following lemma, which is given as Equation (5) in \cite{lang} (set $a = n + k - 1$ and $b = k$ to obtain the expression given in \cite{lang}).

\begin{lemma}[{\cite[Equation (5)]{lang}}]\label{balascatlem}
    For any $a, b \geq 0$ with $b \in [-a, a]$, we have
    \begin{equation}\label{balascat}
        T(a, b) = \sum_{j = 0}^{a - b} (-1)^j \binom{a - b - j}{j} \cdot C_{a - j}.
    \end{equation}
\end{lemma}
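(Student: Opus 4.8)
The plan is to prove \eqref{balascat} by re-expressing both sides as coefficients in powers of the Catalan generating function $C(x) = \sum_{n \ge 0} C_n x^n$, which satisfies $C = 1 + xC^2$, and then reducing the whole statement to a single polynomial identity together with a degree bound. Rather than relying on the citation to \cite{lang}, I would give a self-contained argument. The first step is to record that $T(a,b) = [x^b]\,C(x)^{a+1-b}$. This follows from the Dyck-path description of $T(a,b)$ set up above: decomposing a Dyck path by its last peak as $R\,U\,D^{h}$, where $h$ is the height of the last peak and $R$ is an arbitrary nonnegative lattice path from height $0$ to height $h-1$, one finds that the number of Dyck paths of semilength $N$ whose last peak has height $h$ equals $[x^{N}]\bigl(xC(x)\bigr)^{h}$. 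Taking $N = a+1$ and $h = a+1-b$ and cancelling $x^{a+1-b}$ yields $T(a,b) = [x^{b}]C(x)^{a+1-b}$. (Equivalently one may invoke the standard formula $[x^m]C(x)^r = \tfrac{r}{2m+r}\binom{2m+r}{m}$ and check it agrees with $\tfrac{a-b+1}{a+1}\binom{a+b}{a}$.)

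Next I would rewrite the right-hand side of \eqref{balascat}. Writing $p = a - b$ and $Q_p(x) = \sum_{j \ge 0}(-1)^j\binom{p-j}{j}x^j$, and using $C_{a-j} = [x^a]\bigl(x^jC(x)\bigr)$, the sum in \eqref{balascat} becomes $[x^a]\bigl(C(x)\,Q_p(x)\bigr)$. On the other hand, the left-hand side is $T(a,b) = [x^b]C^{a+1-b} = [x^a]\bigl(x^pC^{p+1}\bigr)$. Thus the lemma is equivalent to the assertion that $[x^a]\bigl(CQ_p - x^pC^{p+1}\bigr) = 0$ for the relevant values of $a$.

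The crux is therefore to understand the power series $F_p := CQ_p - x^pC^{p+1}$, and I claim that $F_p = Q_{p-1}$ (with the convention $Q_{-1} := 0$); in particular $F_p$ is a \emph{polynomial} of degree $\lfloor (p-1)/2\rfloor$. To see this, note that $Q_p$ satisfies the recurrence $Q_p = Q_{p-1} - xQ_{p-2}$, read off from its generating function $\sum_p Q_p y^p = (1 - y + xy^2)^{-1}$, and that $x^pC^{p+1}$ satisfies the \emph{same} recurrence, since $x^{p-1}C^p - x\cdot x^{p-2}C^{p-1} = x^{p-1}C^{p-1}(C-1) = x^{p-1}C^{p-1}\cdot xC^2 = x^pC^{p+1}$, using $C - 1 = xC^2$. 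Hence $F_p$ also satisfies $F_p = F_{p-1} - xF_{p-2}$, and a direct check of the two initial values $F_0 = C - C = 0$ and $F_1 = C - xC^2 = 1$ identifies $F_p$ with the shifted sequence $Q_{p-1}$. (Alternatively, factoring $t^2 - t + x = (t-\alpha)(t-\beta)$ with $\beta = xC$, $\alpha\beta = x$, and using $Q_p = (\alpha^{p+1}-\beta^{p+1})/(\alpha-\beta)$, one computes $F_p = (\alpha^p-\beta^p)/(\alpha-\beta) = Q_{p-1}$ after a short simplification.)

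Finally I would assemble the pieces. Since $\deg Q_{p-1} = \lfloor (p-1)/2\rfloor < \lceil p/2\rceil$, the coefficient $[x^a]F_p$ vanishes as soon as $a \ge \lceil p/2\rceil$, that is, as soon as $2a \ge a - b$, which is exactly the hypothesis $b \ge -a$; together with $b \le a$ (which guarantees $p \ge 0$) this covers the full range $b \in [-a,a]$. For such $a$ we obtain $[x^a](CQ_p) = [x^a](x^pC^{p+1})$, which is precisely \eqref{balascat}. I expect the main obstacle to be exactly this boundary phenomenon, and it is the reason a naive generating-function comparison of the two sides fails: the two sides agree only when $b$ lies in range, the discrepancy being the nonzero polynomial $Q_{p-1}$. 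The heart of the argument is thus to identify this discrepancy exactly and to bound its degree against the constraint $b \ge -a$.
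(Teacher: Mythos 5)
Your proposal is correct, and it is worth noting that the paper itself gives \emph{no} proof of this lemma: it is simply cited as Equation (5) of Lang's article, so any complete argument is necessarily a different route. I checked the steps: the extraction $T(a,b)=[x^b]C(x)^{a+1-b}$ agrees with $\tfrac{a-b+1}{a+1}\binom{a+b}{a}$ via $[x^m]C^r=\tfrac{r}{2m+r}\binom{2m+r}{m}$; the generating function $\sum_p Q_p y^p=(1-y(1-xy))^{-1}$ does give $Q_p=Q_{p-1}-xQ_{p-2}$; the computation $x^{p-1}C^{p-1}(C-1)=x^pC^{p+1}$ correctly shows $x^pC^{p+1}$ obeys the same recurrence; the initial values $F_0=0$, $F_1=C-xC^2=1$ pin down $F_p=Q_{p-1}$; and the degree bound $\deg Q_{p-1}=\lfloor(p-1)/2\rfloor<\lceil p/2\rceil\leq a$ is exactly equivalent to $b\geq -a$, so the coefficient extraction is valid on the whole stated range (including the degenerate cases $-a\leq b<0$, where both sides vanish). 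What your argument buys beyond the citation is twofold: it is self-contained, and it isolates the exact discrepancy $CQ_p-x^pC^{p+1}=Q_{p-1}$, which explains \emph{why} the hypothesis $b\in[-a,a]$ is needed rather than merely where the proof happens to use it. Since the concluding remarks of the paper explicitly ask for a direct proof of \Cref{balascatlem} (ideally by inclusion--exclusion), your generating-function identity is a reasonable answer in that spirit, though not the combinatorial sieve the authors had in mind.
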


% \begin{proof}
% The lemma follows by using a defining recursion for these values of $T(a, b)$, which is
% \begin{enumerate}
%     \item $T(a, a) = C_a$ for all $a \geq 0$,
%     \item $T(a, -a) = 0$ for all $a \geq 1$, and
%     \item $T(a, b) = T(a - 1, b) + T(a, b - 1)$ for all other values of $a, b$.
% \end{enumerate}
% The right-hand side of \eqref{balascat} also satisfies this recursion. 
% Points (1) and (3) are straightforward to prove. 
% For point (2), we note that when $b = -a$ the terms of the sum on the right-hand side are a signed version of the table of numbers listed as \cite[\href{https://oeis.org/A060693}{A060693}]{oeis}. 
% Point (2) now follows since the alternating sums of the rows of \href{https://oeis.org/A060693}{A060693} are known to be $0$.
% \end{proof}

This shows that $A(k, m) = B(k, m)$ for all $k, m \geq 0$ and hence proves \Cref{count}.

\subsection{Count using Dyck paths}

In this sub-section, we count the binary words in $\B(k, m)$ using Dyck paths. 

\begin{lemma}\label{binasdyck}
    For any $k, m \geq 1$, $B(k, m)$ is the number of Dyck paths of semilength $(k + 1)$ where the sum of the heights of the first and last peak is $(2k - m)$.
\end{lemma}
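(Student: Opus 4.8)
The plan is to construct an explicit bijection, organised by the number of zeros in the word, between $\B(k,m)$ and the set of Dyck paths in the statement. The starting observation is that for any Dyck path the height of its first peak equals the length of its initial run of up-steps (its \emph{first ascent} $a$) and the height of its last peak equals the length of its final run of down-steps (its \emph{last descent} $b$). So I must count Dyck paths $P$ of semilength $k+1$ with $a+b=2k-m$. Since $a,b\ge 1$, the single-peak path $U^{k+1}D^{k+1}$ (which has $a=b=k+1$) never contributes for $m\ge 1$; this is why the statement is read with the convention that a lone peak is counted twice, and I would note this explicitly.

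First I would set up the decomposition on the Dyck-path side. Writing $P=U^{a}D\,M\,U D^{b}$ and peeling off the forced initial $U^{a}D$ and final $U D^{b}$ leaves a lattice path $M$ of $U/D$-steps that starts at height $a-1$, ends at height $b-1$, never goes below the $x$-axis, and, because $a+b=2k-m$, has exactly $m$ steps, of which $k-a$ are up-steps. Conversely any such $M$ glues back to a genuine Dyck path of semilength $k+1$ with first ascent $a$ and last descent $b$, so the count reduces to enumerating admissible middle paths $M$ for each value of $a$.

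The core of the argument is matching these middle paths with the words. Given $w\in\B(k,m)$ with $z$ zeros I set $a=k-z$ (hence $b=(k-m)+z$) and read the \emph{reverse} of $w$ as a path from height $k-1-z$ via $0\mapsto U$ and $1\mapsto D$. The key equivalence to prove is that $w$ avoids every $0^{j}1^{k-j}$ if and only if this reversed path never drops below the $x$-axis. The clean route is to reformulate avoidance: $w$ contains some $0^{j}1^{k-j}$ exactly when there is a cut of $w$ with (number of $0$s in the prefix) $+$ (number of $1$s in the suffix) $\ge k$, so $w\in\B(k,m)$ precisely when this quantity is at most $k-1$ across all cuts; written in terms of a suffix of $w$, this inequality is word-for-word the non-negativity condition for the reversed path started at height $k-1-z$. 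A short check that the endpoint and the up-step count of this path are $b-1$ and $z$ then identifies it with an admissible $M$, and reversal is manifestly invertible.

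I expect the main obstacle to be this equivalence in the third step: pinning down the constant $k-1$ and the cut-based reformulation of ``avoids $0^{j}1^{k-j}$ for all $j$'', together with the bookkeeping that fixes $a=k-z$ and confirms that the heights $a-1$, $b-1$ and the step-multiset of $M$ come out exactly right so that the glued path has semilength $k+1$. Once the per-$z$ bijection is in place, summing over the admissible range of $z$ (equivalently over $a$ with $a,b\ge 1$) delivers $B(k,m)$ on one side and all Dyck paths of semilength $k+1$ with first-plus-last peak height $2k-m$ on the other. As a robustness check I would verify small cases -- for instance $k=3$, where $\B(3,3)=\{010,100,101,110\}$ matches the four semilength-$4$ paths with peak-sum $3$ -- and, should a non-bijective proof be preferred, one can instead show that the Dyck-path count satisfies the same recurrence (i)--(iii) already established for $B(k,m)$.
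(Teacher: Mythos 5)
Your proposal is correct and is essentially the paper's own proof: grouping words by their number of zeros and gluing $U^{a}D$, the reversed word (read $0\mapsto U$, $1\mapsto D$), and $UD^{b}$ produces exactly the same map $1^{a_j}0\cdots 01^{a_0}\mapsto U^{k-j}D^{a_0+1}UD^{a_1}\cdots UD^{a_j}UD^{k+j-m}$ that the paper writes in run-length form. Your cut-based reformulation of avoiding all $0^{j}1^{k-j}$ is a welcome elaboration of the inequalities $a_0+\cdots+a_i<(k-j)+i$ that the paper states without comment.
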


\begin{proof}
The binary words in $\B(k, m)$ with $j$ $0$s are of the form
\begin{equation*}
    1^{a_j} 0 1^{a_{j - 1}} 0 \cdots 1^{a_2} 0 1^{a_1} 0 1^{a_0}
\end{equation*}
where $(a_0, a_1, \ldots, a_j)$ is a sequence in $\mathbb{Z}_{\geq 0}$ such that
\begin{itemize}
    \item $j \in [0, k - 1]$,
    \item $a_0 + a_1 + \cdots a_i < (k - j) + i$ for all $i \in [0, j]$, and
    \item $j + a_0 + a_1 + \cdots a_j = m$.
\end{itemize}
We associate the Dyck path given by
\begin{equation*}
    U^{k - j} D^{a_0 + 1} U D^{a_{1}} \cdots U D^{a_{j - 1}} U D^{a_j} U D^{k + j - m}
\end{equation*}
to such a sequence. 
This gives us the required result.
\end{proof}

As a consequence of the above lemma and \Cref{count}, we obtain the following result which gives an expression for the numbers listed as \cite[\href{https://oeis.org/A114503}{A114503}]{oeis}.

\begin{proposition}\label{dycksumpeak}
    The number of Dyck paths of semilength $n$ where $s \leq 2n - 2$ is the sum of the heights of the first and last peaks is
    \begin{equation*}
        \sum_{j = 1}^{\lfloor s/2 \rfloor} (-1)^{j - 1} j \binom{s - j}{j} C_{n - 1 - j}.
    \end{equation*}
\end{proposition}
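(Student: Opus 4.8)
The plan is to obtain this as an immediate corollary of \Cref{binasdyck} together with the closed form for $B(k,m)$ established while proving \Cref{count}. First I would reparametrize the Dyck paths: a path of semilength $n$ is a path of semilength $k+1$ with $k = n - 1$, and requiring that the heights of its first and last peaks sum to $s$ is, by \Cref{binasdyck}, the same as requiring $2k - m = s$, i.e. $m = 2n - 2 - s$. Hence the quantity to be computed is exactly $B(n - 1,\, 2n - 2 - s)$.

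Next I would invoke the identity $B(k, m) = A(k, m)$, where $A(k,m)$ is the right-hand side of \eqref{conj}; this identity was shown for \emph{all} $k, m \ge 0$ in the course of proving \Cref{count}, which is what makes it applicable across the whole range $s \le 2n - 2$ (note that the direct statement of \Cref{count} only covers $2 \le s \le n - 1$). Substituting $k = n - 1$, $m = 2n - 2 - s$, and simplifying $2k - m = s$ turns $A(n-1,\, 2n-2-s)$ into
\begin{equation*}
    \sum_{j = 1}^{s} (-1)^{j - 1}\, j \binom{s - j}{j} C_{n - 1 - j}.
\end{equation*}
Since $\binom{s - j}{j} = 0$ whenever $s - j < j$, every term with $j > \lfloor s/2 \rfloor$ vanishes, so the summation may be truncated at $\lfloor s/2 \rfloor$, which is precisely the claimed expression; the hypothesis $s \le 2n - 2$ ensures $m \ge 0$ and keeps the Catalan indices $n - 1 - j$ nonnegative.

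The only point needing a little care is the endpoint $s = 2n - 2$, i.e. $m = 0$, which lies just outside the hypothesis $m \ge 1$ of \Cref{binasdyck}. Here I would argue directly: $\B(n-1, 0) = \{\varepsilon\}$, so $B(n-1,0) = 1$, and the construction in \Cref{binasdyck} still makes sense, sending the empty word to $U^{\,n-1} D\, U D^{\,n-1}$, whose first and last peaks both have height $n - 1$ and thus sum to $2n - 2$; one checks this is the unique such path. I do not anticipate any real difficulty: the whole content sits in the two cited results, and the remaining work is the index bookkeeping $2k - m = s$ together with the truncation of the summation range.
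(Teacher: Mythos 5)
Your proof is correct and takes essentially the same route as the paper, which presents \Cref{dycksumpeak} precisely as a consequence of \Cref{binasdyck} and \Cref{count} via the substitution $k = n - 1$, $m = 2n - 2 - s$ and truncation of the vanishing binomial terms. Your additional care about the endpoint $s = 2n-2$ (i.e.\ $m = 0$) and about invoking the identity $A(k,m) = B(k,m)$ in its full range $k, m \geq 0$ rather than the restricted statement of \Cref{count} is sound bookkeeping that the paper leaves implicit.
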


We now use \Cref{binasdyck} to obtain an alternate expression for $B(k, m)$. 
To do this we use a refinement of the ballot numbers.

\begin{lemma}
    The number of Dyck paths of semilength $n + 1$ with first peak of height $a$ and last peak of height $b$ where $a + b \leq 2n$ is given by
    \begin{equation*}
        \binom{2n - a - b}{n - a} - \binom{2n - a - b}{n}.
    \end{equation*}
\end{lemma}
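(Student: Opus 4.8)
The plan is to decompose any such Dyck path into a forced prefix, a forced suffix, and a free middle portion, and then to count the middle portions by the reflection principle.

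First I would establish the shape forced by the two peak conditions. Since a Dyck path starts at the origin and never falls below the $x$-axis, its first step is $U$, and its first peak (the first occurrence of $UD$) can only arise after an initial run of up-steps; hence a first peak of height $a$ forces the path to begin with $U^a D$, landing at height $a-1$. Reading the path backwards gives the dual statement: a last peak of height $b$ forces the path to end with $U D^b$, a suffix entered from height $b-1$. The hypothesis $a + b \le 2n$ guarantees that this prefix and suffix do not overlap (equivalently, that the path has at least two peaks), so every path in question factors uniquely as
\[
    U^a\, D\; M\; U\, D^b,
\]
where $M$ is a lattice path from height $a-1$ to height $b-1$ that stays weakly above the $x$-axis. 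Conversely, any such $M$ yields, via this factorization, a Dyck path of semilength $n+1$ whose first and last peaks have heights $a$ and $b$, since the leading $U^a D$ is then the first $UD$ and the trailing $U D^b$ the last. A step count shows that $M$ uses exactly $n-a$ up-steps and $n-b$ down-steps, so the problem reduces to counting such middle paths.

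Next I would count the paths $M$ by reflection. Ignoring the non-negativity constraint, a path from height $a-1$ to height $b-1$ with $2n - a - b$ steps is determined by choosing its $n-a$ up-steps, giving $\binom{2n-a-b}{n-a}$ paths, the first term. The paths to be discarded are exactly those touching height $-1$; reflecting the initial segment up to the first such visit across the line $y=-1$ sets up a bijection with unconstrained paths from height $-a-1$ to height $b-1$ of the same length. A short displacement computation shows these reflected paths carry $n$ up-steps, so they number $\binom{2n-a-b}{n}$, the second term. Subtracting gives the claimed formula.

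The main obstacle will be the careful justification of the decomposition in the boundary cases rather than the reflection count, which is routine. One must verify that after gluing in $M$ the prefix $U^a D$ and suffix $U D^b$ really remain the first and last peaks (so that $M$ cannot manufacture a spurious earlier or later peak), and that when $a+b = 2n$ the path $M$ is the empty path from height $a-1=b-1$ to itself, still leaving two genuine distinct peaks. Checking that $a+b \le 2n$ is precisely the condition excluding the single-peak paths $U^c D^c$ (for which $a=b=c=n+1$, so $a+b = 2n+2$) is what makes the factorization, and hence the bijection, valid throughout the stated range.
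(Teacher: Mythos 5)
Your proposal is correct and follows essentially the same route as the paper: both reduce the count to lattice paths from $(a+1,\,a-1)$ to $(2n-b+1,\,b-1)$ staying weakly above the $x$-axis (the paper cites a Math StackExchange answer for this and invokes the reflection principle without further detail). Your write-up actually supplies more of the justification — the forced prefix $U^aD$ and suffix $UD^b$, the role of the hypothesis $a+b\leq 2n$, and the step counts — than the paper does.
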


\begin{proof}
This lemma is an easy consequence of the result proved in \cite{4587979}. 
The Dyck paths we want to count correspond to paths from $(a + 1, a - 1)$ to $(2n - b + 1, b - 1)$ using the steps $U = (1, 1)$ and $D = (1, -1)$ that do not fall below the $x$-axis. 
These can be counted using the reflection principle (for example, see \cite{Hilton1991CatalanNT}).
\end{proof}

\begin{theorem}
    For any $k, m \geq 1$, we have
    \begin{equation*}
        |\G_m(0^k)| = B(k, m) = \sum_{a = 1}^{2k - m - 1} \left[\binom{m}{k - a} - \binom{m}{k}\right].
    \end{equation*}
\end{theorem}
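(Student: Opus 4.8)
The plan is to read off this formula directly from the two Dyck-path statements that immediately precede it, so the proof is essentially a substitution. By \Cref{binasdyck}, $B(k,m)$ equals the number of Dyck paths of semilength $k+1$ in which the heights of the first and last peaks sum to $2k-m$. The natural move is to partition this set of paths according to the height $a$ of the first peak; once $a$ is fixed, the last peak is forced to have height $b = 2k-m-a$.

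First I would invoke the preceding lemma (the refined peak count) with $n = k$, so that its semilength $n+1 = k+1$ matches ours. It asserts that the number of Dyck paths of semilength $k+1$ with first peak of height $a$ and last peak of height $b$ is
\begin{equation*}
    \binom{2k - a - b}{k - a} - \binom{2k - a - b}{k},
\end{equation*}
valid whenever $a + b \leq 2k$. Substituting the height condition $a + b = 2k - m$ collapses $2k - a - b$ to $m$, so each summand becomes exactly $\binom{m}{k-a} - \binom{m}{k}$, which is precisely the shape of the terms in the claimed expression. I would also note in passing that the lemma's hypothesis $a + b = 2k - m \leq 2k$ holds automatically for all $m \geq 1$.

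Next I would pin down the index of summation. Since every peak has height at least $1$, I need $a \geq 1$ together with $b = 2k - m - a \geq 1$, i.e.\ $a \leq 2k - m - 1$; this is exactly the range $a \in [1, 2k-m-1]$ appearing in the statement (and it correctly gives the empty sum, hence $0$, once $m \geq 2k-1$, matching point (ii) of the recurrence). Summing the per-$a$ counts over this range then yields the theorem, and the chain $|\G_m(0^k)| = B(k,m)$ has already been established in \Cref{countsec}.

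The only genuine subtlety, and the step I would check most carefully, is that the two peak-counting results are applied to the same family of paths with no degenerate overlap: I must be sure ``first peak'' and ``last peak'' never refer to a single peak. The one single-peak Dyck path of semilength $k+1$ is $U^{k+1}D^{k+1}$, whose first and last peak both have height $k+1$, forcing $a+b = 2(k+1) > 2k \geq 2k - m$; hence it never meets the height condition and is harmlessly excluded. Once this is observed, everything that remains is routine binomial bookkeeping, so I do not anticipate a real obstacle here.
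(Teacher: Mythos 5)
Your proposal is correct and follows exactly the route the paper intends: the paper gives no explicit proof of this theorem, presenting it as the immediate combination of \Cref{binasdyck} (with semilength $k+1$ and peak-height sum $2k-m$) and the preceding refined peak-count lemma applied with $n=k$, summed over the first-peak height $a\in[1,2k-m-1]$. Your additional checks (the hypothesis $a+b\le 2k$, the empty-sum case $m\ge 2k-1$, and the exclusion of the single-peak path $U^{k+1}D^{k+1}$) are all sound.
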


Studying the bijection in \Cref{binasdyck} and the representation of Grassmannian permutations using binary words, we get the following results.

\begin{corollary}\label{totbinfix0}
    The number of binary words (of any length, including the empty word) that avoid $0^i1^{k - i}$ for all $i \in [0, k]$ and have exactly $j$ $0$s is the ballot number $T(k, j + 1)$.
\end{corollary}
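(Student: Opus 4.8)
The plan is to reuse the explicit description of words in $\B(k,m)$ from the proof of \Cref{binasdyck}, but to drop the length constraint and instead track an invariant that is \emph{independent} of the length. Recall that a word avoiding $0^i1^{k-i}$ for all $i\in[0,k]$ with exactly $j$ zeros (which forces $j\in[0,k-1]$) has the form $1^{a_j}01^{a_{j-1}}0\cdots 01^{a_0}$, where $(a_0,\ldots,a_j)\in\mathbb{Z}_{\ge 0}^{j+1}$ satisfies $a_0+\cdots+a_i<(k-j)+i$ for all $i\in[0,j]$. Since we now allow arbitrary length, the only surviving constraints are these partial-sum inequalities; the equation $j+a_0+\cdots+a_j=m$ used in \Cref{binasdyck} is simply omitted. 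So the quantity to compute is the number of such sequences $(a_0,\ldots,a_j)$.

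First I would send each sequence to the lattice path $U^{k-j}D^{a_0+1}UD^{a_1}\cdots UD^{a_j}UD^{d}$, where $d:=k-(a_0+\cdots+a_j)$ is forced so that up-steps and down-steps balance. A direct count shows every such path has exactly $k+1$ up-steps and $k+1$ down-steps, hence semilength $k+1$; moreover the inequalities $a_0+\cdots+a_i<(k-j)+i$ are precisely the conditions that the path never dips below the $x$-axis (the low point after the block $D^{a_i}$ has height $(k-j)+i-(a_0+\cdots+a_i)-1\ge 0$), while the $i=j$ inequality guarantees $d\ge 1$. Since the path opens with $U^{k-j}$ immediately followed by a down-step, its \emph{first} peak has height exactly $k-j$, and this height is the same regardless of the word's length. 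I would then verify that this assignment is a bijection onto \emph{all} Dyck paths of semilength $k+1$ whose first peak has height $k-j$: any such path must begin $U^{k-j}D$, and it decomposes uniquely into the blocks above by reading off the initial maximal down-run $D^{a_0+1}$ and then the down-run $D^{a_i}$ following each of the remaining $j+1$ up-steps.

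It then remains to count Dyck paths of semilength $k+1$ with first peak of height $k-j$. Here I would invoke the reversal involution on Dyck paths that reverses the step sequence and swaps $U\leftrightarrow D$: this is a height-preserving bijection on Dyck paths of a fixed semilength that interchanges the first and last peaks. Consequently the number of semilength-$(k+1)$ Dyck paths with first peak of height $k-j$ equals the number with last peak of height $k-j$, and the latter is $T(k,j+1)$ by the definition of the ballot numbers (semilength $k+1$, last peak height $(k+1)-(j+1)=k-j$). Chaining the two bijections delivers the claim.

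I expect the main obstacle to be verifying that the length-free map is genuinely a bijection onto \emph{all} Dyck paths with first peak $k-j$ — in particular, pinning down surjectivity and confirming that the degenerate endpoints ($j=0$, $j=k-1$, and the forced terminal descent $D^{d}$) are controlled by the partial-sum inequalities rather than by the now-absent length equation. No generating-function or reflection identity is needed for this corollary: once the first-peak-height bijection and the reversal symmetry are established, the identification with $T(k,j+1)$ is immediate.
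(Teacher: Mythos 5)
Your proposal is correct and follows essentially the same route the paper intends: it reuses the parametrization and Dyck-path bijection from \cref{binasdyck}, observes that fixing the number of $0$s (rather than the length) fixes the first peak height at $k-j$, and converts first-peak counts to last-peak counts via the reversal involution to land on the definition of $T(k,j+1)$. The paper leaves these steps implicit ("studying the bijection in \cref{binasdyck}\dots"), so your explicit verification of surjectivity and of the partial-sum inequalities is a faithful filling-in of the same argument rather than a new one.
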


\begin{corollary}\label{totbinav}
For any $k \geq 1$, the number of binary words that avoid $0^i1^{k - i}$ for all $i \in [0, k]$ is
\begin{equation*}
    \sum_{m = 0}^{2k - 2} B(k, m) = C_{k + 1} - 1.
\end{equation*}
\end{corollary}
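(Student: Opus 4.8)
The plan is to evaluate the sum by reorganizing the count according to the number of zeros in each word, and then to recognize the result as a sum of ballot numbers over Dyck paths of semilength $k + 1$. First I would note that, since $B(k, m) = 0$ for $m \geq 2k - 1$ by point (ii), the finite sum $\sum_{m = 0}^{2k - 2} B(k, m)$ actually counts \emph{all} binary words, of every length (including the empty word), that avoid $0^i 1^{k - i}$ for all $i \in [0, k]$. Rather than grouping these words by length, I would group them by their number of zeros. By \Cref{totbinfix0}, the words with exactly $j$ zeros are counted by $T(k, j + 1)$; moreover any such word has at most $k - 1$ zeros, since a word with $k$ zeros already contains $0^k = 0^k 1^{k-k}$. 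Hence the total equals
\begin{equation*}
    \sum_{j = 0}^{k - 1} T(k, j + 1) = \sum_{i = 1}^{k} T(k, i).
\end{equation*}

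Next I would interpret this sum through the definition of the ballot numbers. Recall that $T(k, i)$ counts the Dyck paths of semilength $k + 1$ whose last peak has height $k + 1 - i$. As $i$ ranges over $[0, k]$, the height $k + 1 - i$ runs through every possible last-peak height $1, 2, \ldots, k + 1$, and these cases are disjoint and exhaustive; therefore $\sum_{i = 0}^{k} T(k, i) = C_{k + 1}$, the total number of Dyck paths of semilength $k + 1$. Subtracting the $i = 0$ term gives
\begin{equation*}
    \sum_{i = 1}^{k} T(k, i) = C_{k + 1} - T(k, 0) = C_{k + 1} - 1,
\end{equation*}
since $T(k, 0) = 1$: the unique Dyck path of semilength $k + 1$ whose last peak attains the maximal height $k + 1$ is $U^{k + 1} D^{k + 1}$.

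An alternative route bypasses \Cref{totbinfix0} and invokes \Cref{binasdyck} directly: the sum $\sum_{m = 0}^{2k - 2} B(k, m)$ counts precisely the Dyck paths of semilength $k + 1$ whose sum of first and last peak heights, namely $2k - m$, lies in $[2, 2k]$. Every Dyck path with at least two peaks has both its first and last peaks of height at most $k$ (if either maximal ascent or descent had length $k + 1$ the path would be $U^{k+1}D^{k+1}$), so its peak-height sum lies in $[2, 2k]$, whereas the single-peak path $U^{k + 1} D^{k + 1}$ has peak-height sum $2(k + 1) = 2k + 2$. Thus exactly one Dyck path is omitted, again yielding $C_{k + 1} - 1$. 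In both routes the whole content of the statement is that precisely one path is excluded, so the step to handle carefully is the boundary behavior: verifying that the omitted object is exactly the single path $U^{k + 1} D^{k + 1}$ (equivalently, that $T(k, 0) = 1$) and that no pattern-avoiding word can have $k$ or more zeros. The ballot-number route is the cleaner one, since the identity $\sum_{i = 0}^{k} T(k, i) = C_{k + 1}$ is merely the partition of all Dyck paths of semilength $k + 1$ by last-peak height and needs no further estimation.
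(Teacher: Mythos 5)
Your proposal is correct and matches the paper's intended argument: the paper derives this corollary directly from the bijection of \Cref{binasdyck} (your second route), observing that the only Dyck path of semilength $k+1$ whose first-and-last-peak height sum falls outside the range $[2, 2k]$ covered by $m \in [0, 2k-2]$ is $U^{k+1}D^{k+1}$. Your first route, via \Cref{totbinfix0} and the partition of Dyck paths of semilength $k+1$ by last-peak height, is an equivalent repackaging of that same bijection, so both versions are essentially the paper's approach.
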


\begin{corollary}
The number of Grassmannian permutations that avoid $\operatorname{id}_k$ is
\begin{equation*}
    \sum_{m = 0}^{2k - 2}|\G_m(0^k)| = C_{k + 1} - \binom{k}{2} - 1.
\end{equation*}
\end{corollary}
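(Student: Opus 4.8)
The plan is to reduce this total count to the binary-word count already established in \Cref{totbinav}, carefully accounting for the one place where the correspondence $w \mapsto G(w)$ fails to be injective, namely on the binary words that represent identity permutations. By \Cref{totbinav} we have $\sum_{m=0}^{2k-2} B(k,m) = C_{k+1} - 1$, so it suffices to compare $B(k,m)$ with $|\G_m(0^k)|$ for each $m$ and to track the total discrepancy between the two.

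First I would split the range of $m$ at $m = k$. For $m \geq k$ we already know from \Cref{prelim} that $B(k,m) = |\G_m(0^k)|$, so these terms contribute nothing to the discrepancy. The interesting range is therefore $m \in [0, k-1]$. For such $m$, a permutation of length $m < k$ cannot contain $\operatorname{id}_k$ at all, so \emph{every} Grassmannian permutation of length $m$ avoids $\operatorname{id}_k$ and hence $|\G_m(0^k)| = |\G_m|$. Likewise, a binary word of length $m < k$ cannot contain any of the length-$k$ patterns $0^j 1^{k-j}$ as a subsequence, so $\B(k,m)$ consists of \emph{all} $2^m$ binary words of that length, giving $B(k,m) = 2^m$.

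Next I would compute the per-length discrepancy. Since $w \mapsto G(w)$ is a bijection on non-identity permutations while the $m+1$ words $0^j 1^{m-j}$ (for $j \in [0,m]$) all map to the single identity permutation of length $m$, we get $|\G_m| = 2^m - m$. Thus for $m < k$ the word count exceeds the permutation count by exactly $m$, i.e. $B(k,m) - |\G_m(0^k)| = 2^m - (2^m - m) = m$. Summing these discrepancies over $m \in [0, k-1]$ yields $\sum_{m=0}^{k-1} m = \binom{k}{2}$, and assembling the pieces gives
\begin{equation*}
\sum_{m=0}^{2k-2} |\G_m(0^k)| = \sum_{m=0}^{2k-2} B(k,m) - \sum_{m=0}^{k-1} m = (C_{k+1} - 1) - \binom{k}{2},
\end{equation*}
which is the claimed formula.

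The only subtle point — and the step I would be most careful about — is pinning down exactly where the overcounting occurs. For $m \geq k$ the identity permutation of length $m$ already contains $\operatorname{id}_k$, and correspondingly all of its word representatives $0^j 1^{m-j}$ already lie outside $\B(k,m)$ (as noted just before the recurrence for $B(k,m)$), so there is no discrepancy in that range. For $m < k$, by contrast, the identity permutation still avoids $\operatorname{id}_k$ and its $m+1$ representatives are all counted among binary words, producing precisely the correction $\binom{k}{2}$. Everything else is the routine arithmetic of the geometric and triangular sums, which cancel cleanly against the $C_{k+1}-1$ from \Cref{totbinav}.
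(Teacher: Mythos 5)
Your proof is correct and follows essentially the route the paper intends: reduce to the binary-word total $C_{k+1}-1$ from the preceding corollary and subtract the overcount $\sum_{m=0}^{k-1} m = \binom{k}{2}$ coming from the $m+1$ words $0^j1^{m-j}$ that all represent the identity permutation of each length $m<k$, while noting that no discrepancy arises for $m\geq k$. Nothing further is needed.
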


\section{Parity restrictions}\label{parsec}

A permutation is said to be \emph{odd} if it has an odd number of inversions (occurrences of the pattern $21$). 
We define a binary word $w$ to be odd if the corresponding permutation $G(w)$ is odd. 
We have the following characterization of odd binary words.

\begin{proposition}
    If $w$ is the binary word
    \begin{equation*}
        1^{a_{k}} 0 1^{a_{k - 1}} 0 \cdots 1^{a_1} 0 1^{a_0}
    \end{equation*}
    then the number of inversions in the permutation $G(w)$ is $\sum\limits_{i = 1}^{k} i \cdot a_i$. 
    In particular, $w$ is odd if and only if an odd number of terms in the sequence $(a_1, a_3, a_5, \ldots)$ are odd.
\end{proposition}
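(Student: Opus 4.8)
The plan is to reduce the count of inversions to a count of subsequences in $w$. First I would recall that, by the definition of $G(w)$, the first $k$ entries of the one-line notation are the positions of the zeros of $w$ listed in increasing order, and the remaining entries are the positions of the ones of $w$ listed in increasing order. Since each of these two blocks is increasing, no inversion can occur within a single block, so every inversion of $G(w)$ must pair an entry of the first block (a zero-position $z$) with a later entry of the second block (a one-position $o$). Because the first block entirely precedes the second in the one-line notation, the required position inequality holds automatically, and the only condition on the pair is that $z > o$.

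The key observation is that $z > o$ means precisely that, in the word $w$, the $1$ sitting at position $o$ occurs \emph{before} the $0$ sitting at position $z$. Hence the number of inversions of $G(w)$ equals the number of subsequences $10$ of $w$, i.e.\ the number of pairs consisting of a $1$ and a later $0$. I would justify this correspondence once and then count these subsequences directly from the block structure of $w$.

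For the counting step I would read $w = 1^{a_{k}} 0 1^{a_{k - 1}} 0 \cdots 1^{a_1} 0 1^{a_0}$ from left to right and note that the block $1^{a_i}$ is followed by exactly $i$ of the $k$ zeros: the leftmost block $1^{a_k}$ precedes all $k$ zeros, while $1^{a_0}$ follows all of them. Therefore each of the $a_i$ ones in that block forms a $10$ subsequence with each of the $i$ zeros after it, contributing $i \cdot a_i$ such subsequences. Summing over all blocks gives
\[
    \sum_{i = 0}^{k} i \cdot a_i = \sum_{i = 1}^{k} i \cdot a_i,
\]
which is the claimed inversion count (the $i = 0$ term vanishes, reflecting that ones after every zero create no inversion).

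For the parity statement I would reduce this sum modulo $2$: each term with $i$ even is even and vanishes, while each term with $i$ odd contributes $a_i \bmod 2$. Thus $\sum_{i = 1}^{k} i \cdot a_i$ is odd exactly when $\sum_{i \text{ odd}} a_i$ is odd, that is, when an odd number of the entries $a_1, a_3, a_5, \ldots$ are themselves odd. I do not expect a genuine obstacle here; the only point requiring care is the reduction carried out in the first two paragraphs, namely correctly identifying which pairs of entries of $G(w)$ can form an inversion and translating the inequality $z > o$ into the statement that a $1$ precedes a $0$ in $w$.
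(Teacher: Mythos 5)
Your proposal is correct and follows essentially the same route as the paper: both reduce the inversion count of $G(w)$ to the number of $10$ subsequences of $w$ and then evaluate that count from the block structure, obtaining $\sum_{i} i\cdot a_i$ (the paper tallies, for each $0$, the number of $1$s before it, while you tally, for each block of $1$s, the number of $0$s after it — the same count). Your more explicit justification that every inversion pairs a zero-position with a one-position is a welcome elaboration of a step the paper leaves implicit.
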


\begin{proof}
We need to count the number of occurrences of the pattern $21 = G(10)$ in the Grassmannian permutation $G(w)$. 
This is just the number of times $10$ appears as a subsequence of $w$. 
Hence, the number of inversions contributed by a $0$ in $w$ is the number of $1$s before it. 
This gives us the required expression for the number of inversions.
\end{proof}

\begin{remark}
As a consequence of the above result, we obtain a generating function for Grassmannian permutations keeping track of the number of inversions. 
Using $x$ to keep track of the length of the permutation and $t$ for the number of inversions, the required generating function is
\begin{equation*}
    \left(\frac{1}{1 - x}\right) \left[1 + \sum_{k \geq 1} \left( \frac{x}{1 - xt}\right) \left( \frac{x}{1 - xt^2}\right) \cdots \left( \frac{x}{1 - xt^k}\right) \right] - \frac{x}{(1 - x)^2}.
\end{equation*}
The last term removes the over-counting for the identity permutations.
\end{remark}

We study odd and even Grassmannian permutations that avoid $\operatorname{id}_k$. 
We use $O(k, m)$ to denote the number of odd binary words in $\B(k, m)$ and similarly define $E(k, m)$. 
Note that in particular, we have
\begin{equation*}
    B(k, m) = O(k, m) + E(k, m)
\end{equation*}
and that $O(k, m)$ is the number of odd permutations in $\G_m(0^k)$. 
Since we have already obtained expressions for $B(k, m)$ in \Cref{countsec}, we only list expressions for $O(k, m)$.

In the following results, we set $C_n$ to be $0$ if $n$ is not an integer. 
Similar to \cite[Proposition 3.1]{grass}, we have the following.

\begin{proposition}\label{paritycatalan}
    For any $k \geq 2$, we have
    \begin{equation*}
        O(k, 2k - 2) = \dfrac{C_{k - 1} + C_{(k - 2)/2}}{2}.
    \end{equation*}
    Also, $O(k, 2k - 3) = 2E(k, 2k - 2)$.
\end{proposition}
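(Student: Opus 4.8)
The plan is to reduce the first identity to a single signed count of Dyck paths and to evaluate that count by a generating-function computation; the second identity will then follow from two sign-tracking bijections together with the first.

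For $m=2k-2$ the avoidance constraints force every word of $\B(k,2k-2)$ to have exactly $k-1$ zeros: writing such a word as $1^{a_{k-1}}0\cdots01^{a_0}$, one gets $a_0=0$, $a_1+\cdots+a_i\le i$ for all $i$, and $a_1+\cdots+a_{k-1}=k-1$, so these words are in bijection with Dyck paths of semilength $k-1$ and $B(k,2k-2)=C_{k-1}$. Since $O+E=B$, it suffices to find $E(k,2k-2)-O(k,2k-2)=\sum_w(-1)^{\mathrm{inv}(w)}$. By the preceding proposition $\mathrm{inv}(w)\equiv a_1+a_3+\cdots\pmod2$, so this difference equals $\Sigma(k-1)$, where
\begin{equation*}
\Sigma(n):=\sum(-1)^{a_1+a_3+\cdots}
\end{equation*}
ranges over all Dyck paths $UD^{a_1}UD^{a_2}\cdots UD^{a_n}$ of semilength $n$. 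I will prove $\Sigma(n)=-C_{(n-1)/2}$ (with $C_{(n-1)/2}=0$ for even $n$); granting this, $O-E=C_{(k-2)/2}$ and the first formula follows.

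The exponent $a_1+a_3+\cdots$ counts the down-steps that follow an odd-indexed up-step, so, reading a path left to right, I weight each down-step by $(-1)^p$, where $p\in\{0,1\}$ is the parity of the number of up-steps seen so far, while up-steps have weight $1$ and toggle $p$. Let $A$ and $B$ be the weighted generating functions (with $x$ marking semilength) of paths that begin with $p=0$ and have even, respectively odd, semilength; flipping the initial value of $p$ multiplies the weight of a semilength-$n$ path by $(-1)^n$, so the corresponding functions for paths beginning with $p=1$ are $A$ and $-B$. A first-return decomposition $P=U\,P_1\,D\,P_2$, in which the parity of the semilength of $P_1$ fixes both the weight of the returning step $D$ and the starting parity of $P_2$, then gives the single relation
\begin{equation*}
A+B=1-x(A^2+B^2).
\end{equation*}
Separating even and odd powers of $x$ yields $A=1$ and $B=-x(1+B^2)$; solving the latter with $B(0)=0$ gives $B=\tfrac{-1+\sqrt{1-4x^2}}{2x}=-x\,C(x^2)=\sum_{m\ge0}-C_mx^{2m+1}$, and reading off coefficients gives $\Sigma(n)=-C_{(n-1)/2}$. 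I expect this generating-function step — specifically, carrying the parity weight correctly through the first-return decomposition — to be the main obstacle; a sign-reversing involution on Dyck paths with $C_{(n-1)/2}$ fixed points is a conceivable alternative, but it looks more delicate to make explicit.

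For the relation $O(k,2k-3)=2E(k,2k-2)$ I plan to avoid a second signed computation. Write $\B(k,2k-3)=X\sqcup Y$, where $X$ collects the words with $k-2$ zeros and $Y$ those with $k-1$ zeros (no other zero-count is allowed). Every word of $\B(k,2k-2)$ begins with $1$ and ends with $0$ (in the encoding above, $a_{k-1}\ge1$ because $a_1+\cdots+a_{k-2}\le k-2<k-1$, and $a_0=0$). Deleting the trailing $0$ is a bijection $\B(k,2k-2)\to X$, and deleting the leading $1$ is a bijection $\B(k,2k-2)\to Y$ (the inverses append a $0$, respectively prepend a $1$); in each case the removed symbol sits before or after all $k-1$ symbols of the opposite kind and hence destroys exactly $k-1$ inversions. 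Composing, $X\cong Y$ by a map preserving inversion parity, so $O_X=O_Y$ and $O(k,2k-3)=2O_X$. Finally, since $\B(k,2k-2)\to X$ shifts the inversion number by $k-1$, we get $O_X=E(k,2k-2)$ when $k$ is even; when $k$ is odd, $C_{(k-2)/2}=0$ forces $O(k,2k-2)=E(k,2k-2)$ by the first part, and again $O_X=O(k,2k-2)=E(k,2k-2)$. In either parity $O(k,2k-3)=2E(k,2k-2)$, as required.
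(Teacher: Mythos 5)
Your proposal is correct. The second half (the relation $O(k,2k-3)=2E(k,2k-2)$) is essentially the paper's own argument: both split $\B(k,2k-3)$ by the number of $0$s, use the append-a-$0$ / prepend-a-$1$ bijections with $\B(k,2k-2)$, observe that each shifts the inversion count by $k-1$, and invoke $O(k,2k-2)=E(k,2k-2)$ for odd $k$ to unify the two parities. The first half, however, takes a genuinely different route. The paper evaluates the discrepancy $O-E$ combinatorially: it defines a sign-reversing involution on Dyck paths of semilength $k-1$ by toggling the first peak or valley at even height, shows the fixed points are exactly the paths with all peaks and valleys at odd height, proves all such paths are odd, and counts them as $C_{(k-2)/2}$ by a halving bijection (its \cref{alleven}) --- precisely the ``sign-reversing involution with $C_{(n-1)/2}$ fixed points'' you flagged as a conceivable alternative. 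You instead compute the signed sum $\Sigma(n)=\sum(-1)^{a_1+a_3+\cdots}$ by a parity-weighted first-return decomposition; your relation $A+B=1-x(A^2+B^2)$, the separation into $A=1$ and $B=-x(1+B^2)$, and the conclusion $\Sigma(n)=-C_{(n-1)/2}$ all check out (I verified the decomposition bookkeeping: the returning down-step carries weight $(-1)^{1+|P_1|}$ and $P_2$ restarts at parity $1+|P_1|$, giving exactly the stated quadratic relation). Your approach buys a uniform algebraic derivation that also yields $\Sigma(n)=0$ for even $n\ge 2$ for free, at the cost of being less bijective; the paper's involution is more explicit and is reused later (the same toggling idea drives \cref{oddcount}), so it generalizes more readily to $O(k,m)$ for arbitrary $m$.
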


Before proving this result, we need a small lemma.

\begin{lemma}\label{alleven}
    The number of Dyck paths of semilength $n$ that have all peaks and valleys at odd height is $C_{(n - 1)/2}$.
\end{lemma}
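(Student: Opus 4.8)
The plan is to split on the parity of $n$ and, in the odd case, build an explicit bijection with the Dyck paths of semilength $(n-1)/2$. First I would record the parity constraint forced by the hypothesis. Writing a path as an alternating sequence of maximal up-runs and down-runs, the first up-run goes from the axis to the first peak, so its length equals that peak's (odd) height; every interior run connects a peak to a valley or a valley to a peak, so its length is a difference of two odd numbers and is therefore even; and the final down-run has odd length equal to the last peak's height. Summing the up-run lengths gives the total number of up-steps, namely $n$, and this sum is $\text{odd} + \text{even} + \cdots + \text{even} = \text{odd}$. Hence no such path exists unless $n$ is odd, which matches the convention $C_{(n-1)/2} = 0$ when $n$ is even.

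Now assume $n = 2p + 1$. The key first observation is that the path cannot return to the $x$-axis in its interior: such a return would be a down-step immediately followed by an up-step at height $0$, i.e.\ a valley of even height, which is forbidden. Hence the path equals $U P' D$ for a unique Dyck path $P'$ of semilength $n - 1 = 2p$. Tracking peaks and valleys through this peeling is the heart of the argument: the initial $U$ is followed by the first (up-)step of $P'$ and the final $D$ is preceded by the last (down-)step of $P'$, so neither junction is a peak or a valley, and the peaks and valleys of the original path are exactly those of $P'$ with heights raised by $1$. Consequently the path has all peaks and valleys at odd height if and only if $P'$ has all of them at even height.

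Next I would show that a Dyck path has all peaks and valleys at even height if and only if every maximal run of up-steps and every maximal run of down-steps has even length: starting at the even height $0$ and turning only at even heights forces each monotone stretch between consecutive turning points to span an even height difference, and conversely even-length runs keep every turning point at even height while passing straight through the odd heights. Given this, each run of $P'$ starts at an odd position and ends at an even one, so every pair of consecutive steps $(2i-1, 2i)$ lies within a single run and is therefore either $UU$ or $DD$. The contraction $UU \mapsto U$, $DD \mapsto D$ then sends $P'$ to a word of length $p$ whose heights are exactly half those of $P'$ (heights of $P'$ at even positions are even); since those halved heights stay nonnegative and return to $0$, this word is a Dyck path of semilength $p$, with inverse the doubling map $U \mapsto UU$, $D \mapsto DD$. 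This bijection yields $C_p = C_{(n-1)/2}$ paths and finishes the count.

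I expect the main obstacle to be the bookkeeping in the second paragraph, namely verifying cleanly that peeling off the outer $U$ and $D$ neither creates nor destroys peaks or valleys, so that the odd-height condition transfers exactly to the even-height condition on $P'$. Once that correspondence and the ``even height $\iff$ even run-length'' equivalence are established, the halving/doubling bijection is routine.
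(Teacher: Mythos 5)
Your proof is correct and, despite being organized in two stages (peeling off the outer $UD$ and then contracting even-length runs), it produces exactly the same bijection as the paper, which directly writes such a path as $U^{2a_1+1}D^{2b_1}\cdots U^{2a_k}D^{2b_k+1}$ and sends it to $U^{a_1}D^{b_1}\cdots U^{a_k}D^{b_k}$. Both arguments rest on the same two observations — the forced parities of the maximal runs and the primitivity of the path — so this is essentially the paper's proof, written out in more detail.
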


\begin{proof}
Since the first peak should be at odd height, the first string of up-steps should be of odd length. 
The string of down-steps following it should be of even length since the first valley should be at odd height. 
Continuing this logic, we see that such Dyck paths are those of the form
\begin{equation*}
    U^{2a_1 + 1} D^{2b_1} \cdots U^{2a_k} D^{2b_k + 1}.
\end{equation*}
This shows that the semilength $n$ should be odd. 
Also, note that such a Dyck path is primitive (i.e., it only touches the $x$-axis at $(0, 0)$ and $(2n, 0)$). 
Hence, associating the Dyck path of semilength $(n - 1)/2$ given by
\begin{equation*}
    U^{a_1} D^{b_1} \cdots U^{a_k} D^{b_k}
\end{equation*}
to this Dyck path gives the required count.
\end{proof}

\begin{proof}[Proof of \Cref{paritycatalan}]
Just as in the proof of \cite[Proposition 3.1]{grass}, we use Dyck paths to represent the words in $\B(k, 2k - 2)$. 
Notice that any word in $\B(k, 2k - 2)$ is of the form
\begin{equation*}
    1^{a_{k -1}} 0 1^{a_{k - 2}} 0 \cdots 1^{a_2} 0 1^{a_1} 0
\end{equation*}
where $(a_1, a_2, \ldots, a_k)$ is a sequence in $\mathbb{Z}_{\geq  0}$ such that $a_1 + a_2 + \cdots + a_i \leq i$ for all $i \in [k - 1]$ and $a_1 + a_2 + \cdots + a_{k - 1} = k - 1$. 
These sequences correspond to Dyck paths of semilength $(k - 1)$ by setting $a_i$ to be the number of down-steps immediately following the $i^{th}$ up-step:
\begin{equation*}
    U D^{a_1} U D^{a_2} \cdots U D^{a_{k - 1}}.
\end{equation*}
An \emph{odd Dyck path} is one where, when written in the above form, an odd number of terms in $(a_1, a_3, a_5, \ldots)$ are odd. 
We define even Dyck paths to be those that are not odd.

Note that $O(k, 2k - 2)$ is the number of odd Dyck paths of semilength $(k - 1)$ and $E(k, 2k - 2)$ is the number of even Dyck paths of semilength $(k - 1)$. 
The total number of Dyck paths of semilength $(k - 1)$ is $C_{k - 1}$. 
Hence, we can prove the expression for $O(k, 2k - 2)$ by showing that there are $C_{(k - 2)/2}$ more odd Dyck paths than even ones.

We first consider Dyck paths that have at least one peak or valley at even height. 
For such a Dyck path, find the first peak or valley at even height. 
If it is a peak, change it to a valley and if it is a valley, change it to a peak.

\begin{example}
The first peak or valley at even height in the Dyck path on the left in \Cref{changeeven} is the first valley, which is at height $0$. 
Changing this to a peak gives the Dyck path on the right. 
Note that the change in the sequence $(a_1, a_2, a_3, a_4)$ corresponding to the Dyck path is as follows:
\begin{equation*}
    (1, 0, 1, 2) \rightarrow (1 - 1, 0 + 1, 1, 2) = (0, 1, 1, 2).
\end{equation*}
Hence, the Dyck path on the left is even and the one on the right is odd.
\end{example}

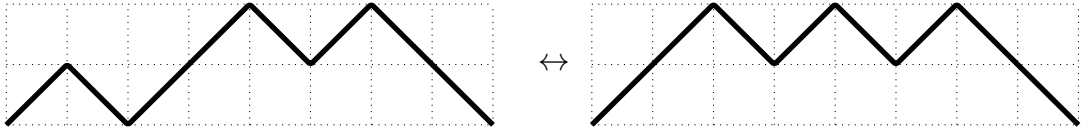
\begin{figure}[H]
    \centering
    \begin{tikzpicture}[scale=0.8]
    \draw[dotted] (0, 0) grid (8, 2);
    \draw[rounded corners=1, color=black, line width=2] (0, 0) -- (1, 1) -- (2, 0) -- (3, 1) -- (4, 2) -- (5, 1) -- (6, 2) -- (7, 1) -- (8, 0);
    % \node [circle, draw = black, fill = blue, inner sep = 2pt] at (2, 0) {};
    \node at (9, 1) {$\leftrightarrow$};
    \end{tikzpicture}
    \begin{tikzpicture}[scale=0.8]
    \draw[dotted] (0, 0) grid (8, 2);
    \draw[rounded corners=1, color=black, line width=2] (0, 0) -- (1, 1) -- (2, 2) -- (3, 1) -- (4, 2) -- (5, 1) -- (6, 2) -- (7, 1) -- (8, 0);
    % \node [circle, draw = black, fill = blue, inner sep = 2pt] at (2, 2) {};
    \end{tikzpicture}
    \caption{Changing the first peak or valley at even height.}
    \label{changeeven}
\end{figure}

It can be checked that this gives a bijection between odd and even Dyck paths that have at least one peak or valley at even height.

We now show that Dyck paths where all peaks and valleys are at odd heights must be odd. 
This will then prove our required result by \Cref{alleven}. 
Suppose the Dyck path
\begin{equation*}
    U D^{a_1} U D^{a_2} \cdots U D^{a_{k - 1}}
\end{equation*}
has all peaks and valleys at odd heights. 
If $a_i \neq 0$ and $i < (k - 1)$, then there is a peak after the $i^{th}$ up-step of height $i - (a_1 + a_2 + \cdots + a_{i - 1})$ and a valley before the $(i + 1)^{th}$ up-step of height $i - (a_1 + a_2 + \cdots + a_{i})$. 
This shows that all $a_i$ for $i < (k - 1)$ are even. 
If $k$ is odd, then $a_1 + a_2 + \cdots + a_{k - 1} = k - 1$ and hence $a_{k - 1}$ is even. 
This means that the last peak is of even height $a_{k - 1}$, which is a contradiction. 
This means that $k$ is even, $a_{k - 1}$ is odd, and hence the Dyck path is odd. 
This proves the first part of the proposition.

To prove the second statement in the proposition, we use the same method to study $\B(k, 2k - 3)$ as in \cite[Proposition 3.1]{grass}. 
The binary words in $\B(k, 2k - 3)$ that
\begin{itemize}
    \item have $(k - 2)$ $0$s are in bijection with the words in $\B(k, 2k - 2)$ via adding a $0$ at the end of the word, and
    \item those that have $(k - 2)$ $1$s are in bijection with the words in $\B(k, 2k - 2)$ via adding a $1$ at the start of the word.
\end{itemize}
Both these actions change the parity of the word if and only if $k$ is even. 
This shows that $O(k, 2k - 3) = 2O(k, 2k - 2)$ if $k$ is odd and $O(k, 2k - 3) = 2E(k, 2k - 2)$ if $k$ is even. 
But the expression for $O(k, 2k - 2)$ shows that $O(k, 2k - 2) = E(k, 2k - 2)$ when $k$ is odd. 
This proves the second part of the proposition.
\end{proof}

\begin{remark}
The expression for $O(k, 2k - 2)$ can also be derived using recursions just as in \cite[Proposition 1]{SimSch}. 
The numbers $O(k, 2k - 2)$ are listed as \cite[\href{https://oeis.org/A007595}{A007595}]{oeis} and $E(k, 2k - 2)$ are listed as \cite[\href{https://oeis.org/A000150}{A000150}]{oeis}. 
The number $C_{k - 1} = B(k, 2k - 2)$ coincides with the number of permutations of length $(k - 1)$ that avoid the pattern $132$. 
Similarly, the number $O(k, 2k - 2)$ coincides with the number of even permutations of length $(k - 1)$ that avoid the pattern $132$ and $E(k, 2k - 2)$ coincides with the number of such odd permutations. 
The numbers $O(k, 2k - 2)$ also count the Dyck paths of semilength $(k - 1)$ that have an even number of peaks at even height. 
A bijection between odd Dyck paths and such Dyck paths can be obtained using the same ideas as in the proof of \Cref{paritycatalan}.
\end{remark}

The idea in the proof of \Cref{paritycatalan} can be generalized to obtain an expression for $O(k, m)$ in terms of $B(k, m)$.

\begin{theorem}\label{oddcount}
    For any $k, m \geq 1$, we have
    \begin{equation*}
        2O(k, m) = 
        \begin{cases}
            B(k, m) + B(\frac{k}{2}, \frac{m - 2}{2}) - B(\frac{k}{2}, \frac{m}{2}) - B(\frac{k - 2}{2}, \frac{m - 2}{2}), &\text{if both $k$ and $m$ are even}\\
            B(k, m) - 2B(\left\lfloor \frac{k}{2} \right\rfloor, \left\lfloor \frac{m - 1}{2} \right\rfloor), &\text{otherwise.}
        \end{cases}
    \end{equation*}
\end{theorem}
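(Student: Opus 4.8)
The plan is to reduce the statement to a computation of the signed count $O(k,m) - E(k,m)$: since $B(k,m) = O(k,m) + E(k,m)$ is already available from \Cref{countsec}, it suffices to evaluate $O(k,m) - E(k,m)$ and then write $2O(k,m) = B(k,m) + (O(k,m) - E(k,m))$. I would obtain $O - E$ by generalizing the sign-reversing involution used in the proof of \Cref{paritycatalan}. Represent each word of $\B(k,m)$ by its Dyck path of semilength $k+1$ as in \Cref{binasdyck}, and record, using the inversion formula $\mathrm{inv}(w) = \sum_i i\,a_i$ together with the relation $a_i \equiv 1 + h(P_i) + h(P_{i+1}) \pmod 2$ between a down-run and its two bounding peaks, that the parity of $G(w)$ can be read off from the heights of the peaks and valleys of this path.

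On these paths I would run the involution that locates the first peak or valley at even height and toggles it (peak $\leftrightarrow$ valley), exactly as in \Cref{paritycatalan}. A toggle shifts one step between two consecutive down-runs $D^{a_i}, D^{a_{i+1}}$; since consecutive indices have opposite parities, this always flips the parity of $\mathrm{inv}(w)$, so the involution pairs odd words with even words and cancels them. The surviving (fixed) configurations are those in which every relevant peak and valley sits at odd height, and I would show these all share a single parity, determined by the parities of $k$ and $m$. To count them I would apply the height-halving bijection of \Cref{alleven} (sending $U^{2a+1}D^{2b}\cdots$ to $U^{a}D^{b}\cdots$), under which an all-odd-height path of semilength $N$ with prescribed first and last peak heights collapses to a Dyck path of roughly half the semilength whose first and last peak heights are correspondingly halved; by \Cref{binasdyck} such constrained paths are enumerated by the smaller quantities $B(k/2, (m-2)/2)$, $B(k/2, m/2)$, $B((k-2)/2, (m-2)/2)$, and $B(\lfloor k/2\rfloor, \lfloor (m-1)/2\rfloor)$. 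This would give $2O(k,m) = B(k,m) \pm(\text{fixed-point count})$.

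The main obstacle, and the source of both the case distinction and the multi-term expression, is the boundary of the path. Unlike the case $m = 2k-2$ handled in \Cref{paritycatalan}, for general $m$ the words of size $m$ have a variable number of zeros, so the initial up-run $U^{k-j}$ and the terminal down-run $D^{k+j-m}$ have variable lengths, and their endpoints (the first and last peaks, and the forced height-$0$ valleys) need not sit at odd height; the toggling involution must therefore be defined so as to leave these mandatory boundary features untouched, and the fixed-point set must then be described by inclusion--exclusion over the admissible parities of the first and last peak heights. This is what I expect to produce the alternating combination $B(k/2,(m-2)/2) - B(k/2,m/2) - B((k-2)/2,(m-2)/2)$ when both $k$ and $m$ are even and the single doubled term $-2B(\lfloor k/2\rfloor,\lfloor(m-1)/2\rfloor)$ otherwise, the overall sign reflecting the common parity of the fixed paths. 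The two correction terms should vanish automatically (via the convention $B(k',m') = 0$ once $m' \ge 2k'-1$) when $m = 2k-2$, recovering \Cref{paritycatalan}; carrying out the bookkeeping of these boundary contributions uniformly across all four parity combinations is the hard part.
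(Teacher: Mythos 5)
Your proposal follows essentially the same route as the paper: a sign-reversing involution that toggles the first peak or valley at the offending parity of height, followed by a halving bijection on the surviving all-odd-parity configurations, with the case split coming from whether those fixed paths all share one parity. The only substantive difference is that the paper sidesteps the boundary bookkeeping you flag as the hard part by abandoning the semilength-$(k+1)$ Dyck paths of \Cref{binasdyck} in favour of the lattice paths $D^{a_0}UD^{a_1}\cdots UD^{a_j}$ bounded below by $y=j-k+1$, so that the relevant height parity is that of $j-k+1$ and the forced runs $U^{k-j}$ and $D^{k+j-m}$ never enter the involution; the fixed points are then classified directly (by whether $a_0$ and $a_j$ are zero or odd) rather than by inclusion--exclusion over first and last peak heights.
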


%\commKM{I think using Dyck paths as in \Cref{binasdyck} instead of lattice paths in this proof will be confusing since the notion of odd Dyck paths are already used in the previous proof and in the general proof depending on the parity of $k$ and $j$, it might be that even Dyck paths correspond to odd binary words. Added a remark also.}

\begin{proof}
For $j \in [0, k - 1]$, the words in $\B(k, m)$ with $j$ $0$s are of the form
\begin{equation*}
    1^{a_j} 0 1^{a_{j - 1}} 0 \cdots 1^{a_2} 0 1^{a_1} 0 1^{a_0}
\end{equation*}
where $j + a_0 + a_1 + \cdots + a_j = m$ and $(j - i) + a_0 + a_1 + \cdots + a_i \leq k - 1$ for all $i \in [0, j]$. 
We associate the following lattice path to such a binary word:
% The steps for these lattice paths are $U$ and $D$ and start at the origin, just as in the Dyck path case. 
% The lattice path we associate the the binary word \eqref{binword} is
\begin{equation}\label{latticepath}
    D^{a_0} U D^{a_1} U D^{a_2} \cdots U D^{a_j}.
\end{equation}
These are lattice paths that start at the origin, have $j$ up-steps, $m - j$ down-steps, and do not fall below the line $y = j - k + 1$. 
We use $\B(k, m)$ to denote the set of these lattice paths as well. 
Such a lattice path is called \emph{odd} if an odd number of terms in $(a_1, a_3, a_5, \ldots)$ are odd. 
The remaining lattice paths are called even. 
Note that $O(k, m)$ is the number of odd lattice paths in $\B(k, m)$.

\begin{remark}
We could have used Dyck paths, just as in \Cref{binasdyck}, to represent such binary words. 
However, under this bijection, an odd Dyck path (in the sense mentioned in \Cref{paritycatalan}) might correspond to an even binary word depending on the parity of $k$ and $j$. 
To avoid this confusion, we use these lattice paths.
\end{remark}

We deal with the case when $k$ is odd and $m$ is even, the others are similar. 
We have to show that there are $2B(\frac{k - 1}{2}, \frac{m - 2}{2})$ more even lattice paths than odd ones. 
We first consider lattice paths that have a peak or valley whose height has the same parity as $j - k + 1$ (which, since $k$ is odd, is the parity of $j$). 
For such lattice paths, changing the first such peak or valley gives a bijection between odd and even lattice paths.

\begin{example}
\Cref{latpath} shows two lattice paths that are matched by this bijection corresponding to the binary words $110011$ and $110101$. 
Here $k = 5, m = 6,$ and $j = 2$.
\end{example}

\begin{figure}[H]
    \centering
    \begin{tikzpicture}[scale=1, xscale = -1]
    \draw[dotted] (0, 0) grid (6, 2);
    \draw[rounded corners=1, color=black, line width=2] (0, 0) -- (1, 1) -- (2, 2) -- (3, 1) -- (4, 0) -- (5, 1) -- (6, 2);
    \node at (-0.75, 1) {$\leftrightarrow$};
    \end{tikzpicture}
    \begin{tikzpicture}[scale=1, xscale = -1]
    \draw[dotted] (0, 0) grid (6, 2);
    \draw[rounded corners=1, color=black, line width=2] (0, 0) -- (1, 1) -- (2, 2) -- (3, 1) -- (4, 2) -- (5, 1) -- (6, 2);
    \end{tikzpicture}
    \caption{Matching lattice paths of opposite parity.}
    \label{latpath}
\end{figure}
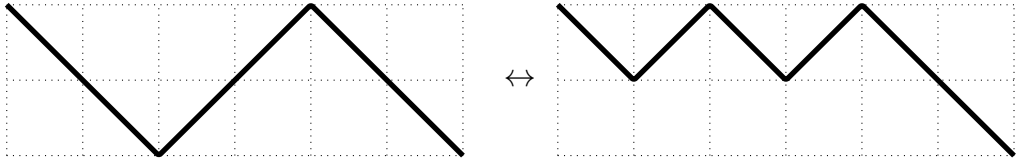

It is easy to see that if $j$ is odd, or if $j$ is even and $a_0 > 0$ is even, then a lattice path of type \eqref{latticepath} must have at least one peak or valley whose height has the same parity as $j$. 
So, if a lattice path in $\B(k, m)$ does not have a peak or valley whose height has the same parity as $j$, then we must have that
\begin{itemize}
    \item $j$ is even,
    \item $a_0 = 0$ or $a_0$ is odd, and
    \item $a_1, \dots , a_{j-1}$ are even (which means that the path must be even).
\end{itemize}
In addition, since $m$ is assumed to be even, the first and third items imply that $a_0 + a_j$ must be even, so $a_0$ and $a_j$ have the same parity.
% The remaining lattice paths in $\B(k, m)$ are those of the form \eqref{latticepath} where the following hold:
% \begin{itemize}
%     \item If $i \in [j - 1]$, then $a_i$ is even.
%     \item If $a_0 \neq 0$, then it has the same parity as $j + 1$.
%     \item If $a_j \neq 0$, then it has the same parity as $j + 1$.
% \end{itemize}
% Since $m$, the total number of steps, is even, the above conditions imply that $j$ must be even. 
% Similarly, exactly one of $a_0$ or $a_j$ being non-zero is also not possible. 
% Note that this means that all the remaining lattice paths are even.

If both $a_0$ and $a_j$ are non-zero, then the lattice path is of the form
\begin{equation*}
    D^{2b_0 + 1} U^{2c_1} D^{2b_1} U^{2c_2} \cdots U^{2c_p} D^{2b_p + 1}.
\end{equation*}
Associating the lattice path given by
\begin{equation*}
    D^{b_0} U^{c_1} D^{b_1} \cdots U^{c_p} D^{b_p}
\end{equation*}
to such a lattice path gives a bijection with the lattice paths in $\B(\frac{k - 1}{2}, \frac{m - 2}{2})$.

If both $a_0, a_j = 0$, then the lattice path is of the form
\begin{equation*}
    U^{2c_1 + 1} D^{2b_1} U^{2c_2} \cdots D^{2b_{p - 1}} U^{2c_p + 1}.
\end{equation*}
Associating the lattice path given by
\begin{equation*}
    U^{c_1} D^{b_1} U^{c_2} \cdots D^{b_{p - 1}} U^{c_p}
\end{equation*}
to such a lattice path gives a bijection with lattice paths in $\B(\frac{k - 1}{2}, \frac{m - 2}{2})$.

The above observations show that there are $2B(\frac{k - 1}{2}, \frac{m - 2}{2})$ more even lattice paths than odd ones in $\B(k, m)$ and hence proves the expression for $O(k, m)$ when $k$ is odd and $m$ is even.

The same method works for the other cases as well. 
When $k$ and $m$ are both even, there are some odd lattice paths that do not have a peak or valley at height having the same parity as $j - k + 1$. 
In fact, precisely $B(\frac{k}{2}, \frac{m - 2}{2})$ of them are odd and $B(\frac{k}{2}, \frac{m}{2}) + B(\frac{k - 2}{2}, \frac{m - 2}{2})$ of them are even. 
This is why the expression in this case is slightly different.
\end{proof}

From the proof of the above result and \Cref{totbinfix0,totbinav}, we obtain the following.

\begin{corollary}
The number of odd binary words that avoid $0^i1^{k - i}$ for all $i \in [0, k]$ and have exactly $j$ $0$s is
\begin{align*}
    \tfrac{1}{2}(T(k, j + 1) - 2T(\tfrac{k}{2}, \tfrac{j + 2}{2})), &\text{ if both $k$ and $j$ are even,}\\
    \tfrac{1}{2}(T(k, j + 1) - 2T(\tfrac{k - 1}{2}, \tfrac{j + 2}{2}) - T(\tfrac{k - 1}{2}, \tfrac{j}{2})), &\text{ if $k$ is odd and $j$ is even, and}\\
    \tfrac{1}{2}(T(k, j + 1) - T(\lfloor\tfrac{k - 1}{2}\rfloor, \tfrac{j + 1}{2})), &\text{ otherwise.}
\end{align*}
\end{corollary}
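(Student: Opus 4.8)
The plan is to re-run the sign-reversing involution from the proof of \Cref{oddcount}, but now holding the number of zeros $j$ fixed while summing over all lengths. First I would represent each binary word that avoids $0^i1^{k-i}$ for all $i \in [0,k]$ and has exactly $j$ zeros as a lattice path $D^{a_0} U D^{a_1} \cdots U D^{a_j}$ with $j$ up-steps that stays weakly above $y = j - k + 1$, exactly as in \eqref{latticepath}; such a word is odd precisely when an odd number of $a_1, a_3, \dots$ are odd. By \Cref{totbinfix0}, the total number of these lattice paths, summed over all lengths, is the ballot number $T(k, j+1)$. The key observation is that the involution of \Cref{oddcount}, which flips the first peak or valley whose height has the same parity as $j - k + 1$, changes neither the number of up-steps $j$ nor the total length, so it descends verbatim to this length-summed, $j$-fixed setting.

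Let $N$ denote the desired count of odd words. Since the involution pairs each odd path with an even path of the same $j$ and the same length, writing $F^{+}$ and $F^{-}$ for the numbers of \emph{even} and \emph{odd} paths that are fixed, one obtains $2N = T(k, j+1) - (F^{+} - F^{-})$. It therefore remains to evaluate $F^{+} - F^{-}$, that is, the signed number of paths having no peak or valley at height $\equiv j - k + 1 \pmod 2$, summed over all lengths.

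As in \Cref{oddcount}, a fixed path has all of its interior runs of even length, with the parity of its initial run (and hence whether it begins with an up- or a down-run) dictated by $j$ and $k$. The crucial new feature is that, because the length is now free, the \emph{last} run is unconstrained in parity and value, and this is exactly the degree of freedom that \Cref{totbinfix0} integrates over. Halving the even runs and stripping the ``$+1$'' from the forced-odd runs sends each family of fixed paths bijectively to a smaller family of lattice paths in $\B(k', \cdot)$ with a prescribed number of up-steps, where $k' = \tfrac k2$ when $k$ is even and $k' = \tfrac{k-1}2$ when $k$ is odd. Summing over length and applying \Cref{totbinfix0} to these smaller families converts each into a ballot number $T(k', J+1)$; sorting the admissible (initial-run, final-run) configurations by the parities of $k$ and $j$ then produces the three cases, the multiplicity-$2$ term arising because two distinct end-configurations yield the same halved up-step count $J$. \Cref{totbinav} supplies the grand total against which the aggregate can be cross-checked.

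The hard part will be the fixed-point bookkeeping. Unlike in \Cref{oddcount}, where fixing the length $m$ coupled the parities of the two free ends, here summing over length frees the final run, so I must carefully enumerate the admissible initial-run/final-run parity patterns, correctly handle the degenerate single-run paths such as $U^{j}$ that have no turning point at all and are automatically fixed, and verify in each case that the resulting half-sized family carries exactly the up-step count giving $T(k', \tfrac{j+2}{2})$, $T(k', \tfrac{j}{2})$, or $T(\lfloor \tfrac{k-1}{2}\rfloor, \tfrac{j+1}{2})$. Pinning down these indices and multiplicities in every parity regime is the delicate step; once it is done, the remainder is a direct transcription of the \Cref{oddcount} argument, and the three displayed formulas follow from $2N = T(k, j+1) - (F^{+} - F^{-})$.
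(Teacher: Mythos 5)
Your plan is exactly the paper's intended argument: the corollary is obtained by re-running the sign-reversing involution from the proof of \Cref{oddcount} with the number of zeros $j$ held fixed and the length summed over, identifying the total via \Cref{totbinfix0} as $T(k,j+1)$, and converting the signed fixed-point count into smaller ballot numbers through the same run-halving bijections. The case-by-case bookkeeping you defer is precisely what the paper also leaves implicit, and your setup $2N = T(k,j+1) - (F^{+}-F^{-})$ together with the halving step is the correct skeleton for it.
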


\begin{corollary}
The number of odd Grassmannian permutations avoiding $\operatorname{id}_k$ is
\begin{equation*}
    \frac{C_{k + 1}}{2} - 2C_{\frac{k + 1}{2}} + 1
\end{equation*}
when $k$ is odd and if $k$ is even it is
\begin{equation*}
    \frac{C_{k + 1} - C_{k/2}}{2} - C_{\frac{k + 2}{2}} + 1.
\end{equation*}
\end{corollary}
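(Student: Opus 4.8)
The plan is to first reduce the statement to a sum of the quantities $O(k,m)$ already studied, and then to evaluate that sum using the formula of \Cref{oddcount} together with the total-sum identity of \Cref{totbinav}. For the reduction, the key observation is that the identity permutation has zero inversions and is therefore even. Consequently, the only place where the encoding $w \mapsto G(w)$ fails to be injective---namely on the words $0^j 1^{n-j}$, all of which map to the identity---involves exclusively \emph{even} permutations. Thus every odd Grassmannian permutation has a unique binary word, and by \Cref{bincont} an odd permutation of size $m$ avoids $\operatorname{id}_k$ if and only if its word lies in $\B(k,m)$. Since the second clause of the recurrence defining $B(k,m)$ forces $B(k,m)=0$ (and hence $O(k,m)=0$) for $m \ge 2k-1$, I would conclude that the number of odd Grassmannian permutations avoiding $\operatorname{id}_k$ is exactly $\sum_{m=0}^{2k-2} O(k,m)$.

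The next step is to substitute the expression for $2O(k,m)$ from \Cref{oddcount}, valid for $m \ge 1$ (with $O(k,0)=0$), and to sum over $m$, separating according to the parity of $m$ since $k$ is fixed. In the odd-$k$ case only the ``otherwise'' branch occurs, so $2O(k,m) = B(k,m) - 2B(\tfrac{k-1}{2}, \lfloor \tfrac{m-1}{2}\rfloor)$; as $m$ runs over $1,\dots,2k-2$, each value $t\in[0,k-2]$ of $\lfloor\tfrac{m-1}{2}\rfloor$ is attained exactly twice, so the correction term $-2\sum_m B(\tfrac{k-1}{2},\lfloor\tfrac{m-1}{2}\rfloor)$ collapses to $-4\sum_{t=0}^{k-2} B(\tfrac{k-1}{2},t)$. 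In the even-$k$ case the even-$m$ terms use the first branch and the odd-$m$ terms the second; here I would treat the pair $B(\tfrac{k}{2},\tfrac{m-2}{2}) - B(\tfrac{k}{2},\tfrac{m}{2})$ by telescoping in the index $\tfrac{m}{2}$, which collapses to $B(\tfrac{k}{2},0) - B(\tfrac{k}{2},k-1) = 1$ using the vanishing $B(\tfrac{k}{2},k-1)=0$ (since $k-1 \ge 2\cdot\tfrac{k}{2}-1$).

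Finally, every residual sum is a partial sum of some $B(\kappa,\cdot)$ over its second argument up to $2\kappa-2$, which by \Cref{totbinav} equals $C_{\kappa+1}-1$; the boundary terms beyond this range vanish, so truncating at $k-2$ (respectively $k-4$) introduces no error. Assembling the pieces, I expect to obtain $2\sum_m O(k,m) = C_{k+1} - 4C_{\frac{k+1}{2}} + 2$ for odd $k$ and $C_{k+1} - C_{k/2} - 2C_{\frac{k+2}{2}} + 2$ for even $k$, which yield the two claimed formulas after dividing by $2$. The main obstacle I anticipate is the bookkeeping in the even-$k$ case: one must align the ranges of the three partial sums so that the telescoping collapse and the shifted total-sum identity of \Cref{totbinav} match up, and carefully track which boundary values of $B$ vanish (those whose second argument is $\ge 2\kappa-1$) so that extending or truncating each range is justified. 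A sanity check at $k=2$ (giving $1$, matching the single odd avoider $21$) and at $k=3$ (giving $4$) would confirm the computation.
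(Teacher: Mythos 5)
Your proposal is correct and follows essentially the route the paper intends (the paper leaves this corollary's proof implicit, citing the proof of \Cref{oddcount} and \Cref{totbinav}): you sum $O(k,m)$ over $m \in [0, 2k-2]$, substitute the formula from \Cref{oddcount}, telescope and regroup, and evaluate each residual partial sum of $B(\kappa,\cdot)$ as $C_{\kappa+1}-1$ via \Cref{totbinav}, using the vanishing $B(\kappa,t)=0$ for $t \ge 2\kappa-1$ to reconcile the ranges. The bookkeeping you outline checks out in both parity cases and reproduces the stated formulas, including the sanity checks at $k=2$ and $k=3$.
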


\section{Other restrictions}\label{othersec}

In this section, we consider the avoidance of $\operatorname{id}_k$ in special classes of Grassmannian permutations mentioned in \cite[Section 2]{grass}.

\subsection{BiGrassmannian permutations}

A permutation $\pi$ is said to be \emph{biGrassmannian} if both $\pi$ and $\pi^{-1}$ are Grassmannian. 
Rewriting a result from \cite{grass} in terms of binary words, we have the following.

\begin{proposition}[{\cite[Proposition 2.1]{grass}}]\label{bigrassprop}
    The biGrassmannian permutations of size $m$ are those in $\G_m(1010)$ and are counted by
    \begin{equation*}
        1 + \binom{m + 1}{3}.
    \end{equation*}
\end{proposition}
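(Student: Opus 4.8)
The plan is to prove both assertions through the binary-word encoding, treating the identification with $\G_m(1010)$ and the enumeration separately. First I would record that the word $1010$ has its $0$'s in positions $2,4$, so $G(1010)=2413$, and hence $\G_m(1010)$ is by definition the set of Grassmannian permutations of size $m$ avoiding $2413$. Since $G(1010)$ is not the identity, \Cref{bincont} lets me replace this by a purely word-theoretic condition: $\G_m(1010)=\{G(w) : w \text{ has length } m \text{ and avoids } 1010 \text{ as a subsequence}\}$. It therefore suffices to show that $G(w)$ is biGrassmannian precisely when $w$ avoids $1010$, and then to count such words.

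For the characterization, I would use that every Grassmannian permutation equals $G(w)$ for some $w$ and that $G(w)$ is automatically Grassmannian; thus $G(w)$ is biGrassmannian if and only if $G(w)^{-1}$ has at most one descent. The heart of the argument is a local description of these descents. A descent of $G(w)^{-1}$ at value $v$ means that $v$ appears after $v+1$ in the one-line notation of $G(w)$. Recalling that $G(w)$ lists the set $A$ of $0$-positions in increasing order and then the $1$-positions in increasing order, I would run through the four cases for whether $v$ and $v+1$ lie in $A$ and check that $v$ follows $v+1$ exactly when $v \notin A$ and $v+1 \in A$, i.e. when $w_v=1$ and $w_{v+1}=0$. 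Consequently the number of descents of $G(w)^{-1}$ equals the number of positions $i$ with $w_i=1,\ w_{i+1}=0$ (the adjacent $10$'s of $w$), so $G(w)^{-1}$ is Grassmannian if and only if $w$ contains at most one adjacent $10$.

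To close the characterization I would prove the elementary equivalence that $w$ contains $1010$ as a subsequence if and only if $w$ has at least two adjacent $10$'s: two disjoint adjacencies immediately spell out $1,0,1,0$, and conversely each of the two $1\to 0$ transitions forced inside the spans $[p_1,p_2]$ and $[p_3,p_4]$ of a pattern occurrence $p_1<p_2<p_3<p_4$ yields an adjacent $10$, the two being distinct. Chaining the equivalences gives ``$G(w)$ biGrassmannian'' $\iff$ ``$w$ has at most one adjacent $10$'' $\iff$ ``$w$ avoids $1010$'' $\iff$ ``$G(w)\in\G_m(1010)$'', which is the first assertion.

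For the enumeration I would sort the length-$m$ words avoiding $1010$ by their number of adjacent $10$'s, which is now $0$ or $1$. Those with none are exactly the words $0^a1^b$ with $a+b=m$; there are $m+1$ of them, and being of the form $0^j1^{m-j}$ they all encode the single identity permutation. Analyzing the run decomposition shows that those with exactly one are precisely the words $0^a1^b0^c1^d$ with $b,c\geq 1$ and $a,d\geq 0$; substituting $b-1,c-1$ turns $a+b+c+d=m$ into a weak composition of $m-2$ into four parts, giving $\binom{m+1}{3}$ words, each non-identity and hence in bijection with distinct biGrassmannian permutations. Adding the lone identity permutation produces the count $1+\binom{m+1}{3}$. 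The main obstacle is the descent computation of the second paragraph, which converts the global condition ``$G(w)^{-1}$ Grassmannian'' into the local adjacency condition; after that the work is routine, the one point requiring care being to collapse the $m+1$ identity-encoding words to a single permutation rather than overcounting them.
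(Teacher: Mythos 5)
Your proof is correct, but note that the paper does not actually prove this statement: it is imported wholesale from Gil and Tomasko (\cite[Proposition 2.1]{grass}) and merely restated in the binary-word language, so there is no internal argument to compare against. What you supply is a self-contained derivation entirely inside the paper's own encoding, and its two pillars are sound: the computation that a descent of $G(w)^{-1}$ at value $v$ occurs exactly when $w_v=1$, $w_{v+1}=0$ (so $G(w)^{-1}$ is Grassmannian iff $w$ has at most one adjacent $10$), and the equivalence between containing $1010$ as a subsequence and having two distinct adjacent $10$'s, which correctly bridges the subsequence condition of \Cref{bincont} with the local run condition. The enumeration is also right: the $1010$-avoiding words split into the $m+1$ words $0^a1^b$, all collapsing to the identity, and the words $0^a1^b0^c1^d$ with $b,c\geq 1$, counted by the weak compositions of $m-2$ into four parts, i.e.\ $\binom{m+1}{3}$, each giving a distinct non-identity permutation by uniqueness of the encoding. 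The one point worth making explicit is that you use both directions of the correspondence — every Grassmannian permutation is some $G(w)$, and the representation is unique off the identity — to conclude that the set equality and the count both hold; you do flag the collapsing issue, so the argument is complete. This is a worthwhile addition in the sense that it makes the paper's citation verifiable within its own framework.
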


We study biGrassmannian permutations avoiding $\operatorname{id}_k$.

\begin{theorem}\label{bigrass}
    For $k\leq m < 2k$, the number of biGrassmannian permutations of size $m$ that avoid $\operatorname{id}_k$ is
    % \begin{equation*}
    %     1 + \binom{m + 1}{3}
    % \end{equation*}
    % if $m < k$ and if  it is
    \begin{equation*}
        \binom{2k - m + 1}{3}.
    \end{equation*}
\end{theorem}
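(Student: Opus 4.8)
The plan is to push everything through the binary-word dictionary and reduce the problem to a lattice-point count. By \Cref{bigrassprop}, the biGrassmannian permutations of size $m$ are exactly those in $\G_m(1010)$, and by \Cref{bincont} a Grassmannian permutation $G(w)$ avoids $\operatorname{id}_k$ if and only if $w$ avoids $0^j1^{k-j}$ for all $j \in [0,k]$. Since $m \geq k$, every word of the form $0^j1^{m-j}$ contains some $0^j1^{k-j}$, so the identity permutation is excluded and each remaining Grassmannian permutation has a unique binary word. Hence I must count the binary words $w$ of length $m$ that simultaneously avoid $1010$ and avoid $0^j1^{k-j}$ for every $j$.

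First I would record two structural facts. A binary word avoids $1010$ as a subsequence if and only if its maximal-run decomposition has at most the block shape $0^a1^b0^c1^d$, since $1010$ occurs precisely when the sequence of block types contains $1,0,1,0$ as a subsequence. Discarding the identity words, the non-identity words avoiding $1010$ are parametrized bijectively by tuples $(a,b,c,d)$ with $a,d \geq 0$, $b,c \geq 1$, and $a+b+c+d = m$, where one checks that distinct such tuples give distinct maximal-run words. Second, a weakly increasing binary subsequence is exactly one of the form $0^j1^{k-j}$, so $w$ avoids all $0^j1^{k-j}$ if and only if its longest weakly increasing subsequence has length at most $k-1$. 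For $w = 0^a1^b0^c1^d$ a short check on where the chosen $0$'s and $1$'s can lie (after picking a $1$ one may only pick further $1$'s) shows this longest length equals $a + d + \max(b,c)$.

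The key observation is that the avoidance constraint collapses to a single inequality. Using $a+b+c+d = m$ I get $m - \bigl(a + d + \max(b,c)\bigr) = b + c - \max(b,c) = \min(b,c)$, so the condition $a + d + \max(b,c) \leq k-1$ is equivalent to $\min(b,c) \geq m-k+1$. It then remains to count tuples $(a,b,c,d)$ with $a,d \geq 0$, $b,c \geq m-k+1$, and $a+b+c+d = m$. Writing $b = (m-k+1)+b'$ and $c = (m-k+1)+c'$ with $b',c' \geq 0$ converts the equation into $a + b' + c' + d = 2k-m-2$ with all four variables nonnegative, which by stars and bars has $\binom{(2k-m-2)+3}{3} = \binom{2k-m+1}{3}$ solutions, as claimed (and this is correctly $0$ when $2k-m-2<0$).

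The routine part is the final stars-and-bars count; the delicate steps are the two structural facts in the second paragraph. The main obstacle I anticipate is verifying the bijective parametrization of $1010$-avoiding words with care, so that the degenerate block shapes occurring when $a$ or $d$ vanishes (namely $1^b0^c1^d$, $0^a1^b0^c$, and $1^b0^c$) are each counted exactly once, and that the formula $a+d+\max(b,c)$ for the longest weakly increasing subsequence is confirmed to persist in these boundary cases. Once this bookkeeping is pinned down, the identity $\min(b,c) = m - \bigl(a+d+\max(b,c)\bigr)$ does the real work.
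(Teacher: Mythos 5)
Your proof is correct, and it takes a genuinely different route from the paper's. The paper splits the relevant binary words into four shape classes ($0^a1^b0^c$, $0^a1^b0^c1^d$, $1^a0^b$, $1^a0^b1^c$, each with its own system of inequalities), handles $m=k$ separately via \Cref{bigrassprop}, and for $m>k$ builds a shift bijection onto the non-identity biGrassmannian permutations of size $2k-m$, then invokes the count $1+\binom{2k-m+1}{3}$ from \Cref{bigrassprop}. You instead parametrize all non-identity $1010$-avoiding words uniformly as $0^a1^b0^c1^d$ with $a,d\geq 0$ and $b,c\geq 1$, observe that avoidance of $\operatorname{id}_k$ is equivalent to the longest weakly increasing subsequence $a+d+\max(b,c)$ being at most $k-1$, and use the identity $m-\bigl(a+d+\max(b,c)\bigr)=\min(b,c)$ to collapse everything to the single condition $\min(b,c)\geq m-k+1$, after which stars and bars finishes. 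I checked the two delicate points you flagged: the run-decomposition parametrization is indeed bijective including the degenerate cases $a=0$ and/or $d=0$, and the formula $a+d+\max(b,c)$ survives those boundary cases. Your argument buys a uniform treatment of $m=k$ and $m>k$, avoids dependence on the closed-form count in \Cref{bigrassprop} (you only need the characterization of biGrassmannian permutations as $\G_m(1010)$), and makes the appearance of $\binom{2k-m+1}{3}$ transparent as a lattice-point count; the paper's bijection, on the other hand, explains the answer structurally as a correspondence with biGrassmannian permutations of the complementary size $2k-m$.
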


\begin{proof}
% If $m < k$, then all the biGrassmannian permutations of size $m$ avoid the identity permutation of size $k$. 
% Hence, the first part of the result follows from \Cref{bigrassprop}.

When $m = k$, the only biGrassmannian permutation of size $k$ that contains $\operatorname{id}_k$ is the identity itself. 
Using \Cref{bigrassprop}, we get that there are
\begin{equation*}
    \binom{k + 1}{3}
\end{equation*}
biGrassmannian permutations of size $k$ that avoid $\operatorname{id}_k$.

When $m \geq k$, the binary words corresponding to biGrassmannian permutations of size $m$ that avoid $\operatorname{id}_k$ are of the following forms:
\begin{enumerate}[label=(\roman*)]
    \item $0^a1^b0^c$ for some $a, b, c \geq 1$ such that $a + b \leq k - 1$ and $a + c \leq k - 1$.
    
    \item $0^a1^b0^c1^d$ for some $a, b, c, d \geq 1$ such that $a + b + d \leq k - 1$ and $a + c + d \leq k - 1$.
    
    \item $1^a0^b$ for some $a, b \geq 1$ such that $a \leq k - 1$ and $b \leq k - 1$.
    
    \item $1^a0^b1^c$ for some $a, b, c \geq 1$ such that $a + c \leq k - 1$ and $b + c \leq k - 1$.
\end{enumerate}

If $m > k$, we show that the binary words listed above are in bijection with the non-identity biGrassmannian permutations of size $2k - m$. 
This will then prove the second part of the result. 
Since $m > k$, words of type (i) cannot have $b \leq m - k$ or $c \leq m - k$. 
We replace $b$ by $b - (m - k)$ and $c$ by $c - (m - k)$. 
We make the same replacement for words of type (ii). 
For words of type (iii) and (iv) we replace $a$ by $a - (m - k)$ and $b$ by $b - (m - k)$. 
It can be checked that this gives us the required bijection.
\end{proof}

We now consider odd biGrassmannian permutations avoiding $\operatorname{id}_k$. 
Since biGrassmannian permutations are precisely those Grassmannian permutations that avoid the pattern $2413$ \cite[Proposition 2.1]{grass}, we have the following from \cite{grassodd}.

\begin{proposition}[{\cite[Theorem 3.3]{grassodd}}]
    Set $a(m)$ to be the number of odd biGrassmannian permutations of size $m$. 
    Then,
    \begin{equation*}
        a(m) =
        \begin{cases}
            \frac{1}{4} \binom{m + 2}{3}, & \text{if $m$ is even}\\
            \frac{1}{24}(m - 1)(m + 1)(m + 3), & \text{if $m$ is odd.}
        \end{cases}
    \end{equation*}
    % \begin{equation*}
    %     \frac{1}{4} \binom{m + 2}{3}
    % \end{equation*}
    % if $m$ is even and if $m$ is odd it is
    % \begin{equation*}
    %     \frac{1}{24}(m - 1)(m + 1)(m + 3).
    % \end{equation*}
\end{proposition}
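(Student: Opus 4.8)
The plan is to count, in place of the permutations themselves, the binary words of length $m$ whose Grassmannian image $G(w)$ is biGrassmannian and odd. By \Cref{bigrassprop} together with \Cref{bincont}, the biGrassmannian permutations of size $m$ are exactly the images $G(w)$ of the binary words $w$ of length $m$ that avoid $1010$ as a subsequence, and since the representation is unique for non-identity permutations, counting such words is the same as counting the permutations. First I would pin down the shape of these words. Collapsing each maximal run of equal letters to a single symbol, a word avoids $1010$ precisely when its reduced (alternating) word is one of the strings that themselves avoid $1010$, namely $\emptyset,0,1,01,10,010,101,0101$. The first four give the identity permutation (word of the form $0^j1^{m-j}$), which has no inversions and is therefore even; the remaining four give the non-identity words
\[
1^a0^b,\qquad 1^a0^b1^c,\qquad 0^a1^b0^c,\qquad 0^a1^b0^c1^d,
\]
with all displayed exponents at least $1$ and summing to $m$. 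These are exactly the four families in the proof of \Cref{bigrass}, now without the $\operatorname{id}_k$ restriction, and they are pairwise disjoint since they have distinct reduced words.

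Next I would record the parity of each family. Since the number of inversions of $G(w)$ equals the number of $10$-subsequences of $w$, the inversions all arise from the single $1$-block that is followed by a $0$-block; a direct count then gives inversion numbers $ab,\ ab,\ bc,\ bc$ for the four shapes above, respectively. Consequently a word in a given family is odd exactly when its two distinguished exponents are both odd. As the identity words are all even, $a(m)$ equals the total number of words in these four families whose two distinguished exponents are odd.

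The remaining step is a parity-constrained composition count. For $1^a0^b$ it is the number of ordered decompositions of $m$ into two odd positive parts; for the three-part shapes $1^a0^b1^c$ and $0^a1^b0^c$ it is the number of compositions of $m$ into three positive parts with two prescribed parts odd and the third free; and for $0^a1^b0^c1^d$ it is the number of compositions of $m$ into four positive parts with the two middle parts odd. I would evaluate each by substituting $2s+1$ for each odd part and applying stars-and-bars, then add the four contributions. Because the parities of the odd parts must be compatible with the parity of $m$, the total naturally splits into the cases $m$ even and $m$ odd, and the four polynomial contributions should collapse to $\tfrac14\binom{m+2}{3}$ and $\tfrac1{24}(m-1)(m+1)(m+3)$; the small values $a(2)=1$, $a(3)=2$, $a(4)=5$ give a quick consistency check. (Alternatively, one could extract the odd count from $\tfrac12$ of the difference between the total $1+\binom{m+1}{3}$ of \Cref{bigrassprop} and the signed sum $\sum_w(-1)^{\mathrm{inv}(w)}$, but the direct count seems cleaner here.)

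I expect the main obstacle to be entirely in this last step: keeping the free positive part genuinely $\geq 1$, handling the boundary compositions correctly, and carrying out the even/odd case-split so that the several quadratic-in-parts sums assemble into the two claimed cubics. No step is conceptually deep, but the parity bookkeeping is exactly where an off-by-one or a dropped boundary term would most easily slip in, so I would anchor the algebra to the verified initial values.
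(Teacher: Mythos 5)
Your argument is correct, but note that the paper does not prove this statement at all: it is imported verbatim as \cite[Theorem 3.3]{grassodd}, so there is no internal proof to compare against. What you have written is a legitimate self-contained derivation in the spirit of the surrounding sections. The structural facts you rely on all check out: by \Cref{bigrassprop} the relevant words are exactly the $1010$-avoiding binary words, the non-identity ones fall into the four disjoint families $1^a0^b$, $1^a0^b1^c$, $0^a1^b0^c$, $0^a1^b0^c1^d$ (reduced words $10$, $101$, $010$, $0101$), the identity words are even, and since the inversion number of $G(w)$ is the number of $10$-subsequences of $w$, the four families contribute $ab$, $ab$, $bc$, $bc$ inversions respectively, so oddness is equivalent to both distinguished exponents being odd. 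The remaining composition count is routine and does close: for $m=2n$ the four contributions are $n$, $\binom{n}{2}$, $\binom{n}{2}$, and $\sum_{u=1}^{n-1}u(2n-2u-1)=\tfrac{1}{6}n(n-1)(2n-1)$, summing to $\tfrac{1}{6}n(n+1)(2n+1)=\tfrac14\binom{m+2}{3}$, and the odd case works out analogously; your initial values $a(2)=1$, $a(3)=2$, $a(4)=5$, $a(5)=8$ are all consistent. The only soft spot is that you state the final polynomial identities as expectations rather than carrying them out, but the setup determines them uniquely and the algebra is elementary. Your approach also has the advantage of being uniform with the paper's later parity arguments (it reuses the $10$-subsequence characterization of inversions from \Cref{parsec}), whereas the cited source proves the result by different means.
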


\begin{theorem}\label{bigrassparity}
    % Set $a(m)$ to be the number of odd biGrassmannian permutations of size $m$. 
    The number of odd biGrassmannian permutations of size $m$ that avoid $\operatorname{id}_k$ is
    \begin{align*}
        a(m), &\text{ if $m \leq k$,}\\
        a(2k - m), &\text{ if $m > k$ and $(m - k)$ is even, and}\\
        a(2k - m - 2), &\text{ if $m > k$ and $(m - k)$ is odd.}
    \end{align*}
\end{theorem}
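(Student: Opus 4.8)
The plan is to mirror the bijective strategy used in the proof of \Cref{bigrass}, but now carried out at the level of \emph{odd} biGrassmannian permutations. Recall that the key device there was a length-reducing map: for $m > k$, binary words of types (i)--(iv) representing biGrassmannian permutations of size $m$ avoiding $\operatorname{id}_k$ were put in bijection with the non-identity biGrassmannian words of size $2k - m$ by subtracting $(m-k)$ from two of the exponents in each word form. The first task is therefore to track how this exponent-subtraction affects the parity of the associated permutation, using the inversion formula $\sum_{i} i \cdot a_i$ from \Cref{prelim}. The case $m \leq k$ is immediate: here every biGrassmannian permutation of size $m$ automatically avoids $\operatorname{id}_k$ (since $\operatorname{id}_k$ needs length at least $k$), so the count is simply $a(m)$.

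For $m > k$, I would analyze the four word types separately, computing the change in inversion number under the substitution. Writing the inversion count for each form explicitly in terms of the exponents $a,b,c,d$, subtracting $(m-k)$ from the two designated exponents shifts the number of inversions by a fixed amount determined by $(m-k)$ and the coefficients attached to those exponents in the inversion formula. The crucial observation I expect to extract is that the parity shift is controlled entirely by the parity of $(m-k)$: when $(m-k)$ is even, the substitution preserves the parity of every word, so odd biGrassmannian words of size $m$ correspond exactly to odd biGrassmannian words of size $2k-m$, giving the count $a(2k-m)$. This explains the second case.

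When $(m-k)$ is odd, the substitution flips parity, so odd words of size $m$ would correspond to \emph{even} non-identity biGrassmannian words of size $2k-m$. Since $a(\cdot)$ counts odd permutations rather than even ones, this correspondence does not directly yield an $a$-value, and this is where I expect the main obstacle to lie. The resolution should be to absorb the parity flip by shifting the target size by two: I would show that the even biGrassmannian permutations of size $2k-m$ are counted by $a(2k-m-2)$, i.e.\ that the number of even biGrassmannian permutations of size $n$ equals the number of odd ones of size $n-2$. This auxiliary identity can be verified directly from the closed forms for $a(m)$ and the total count $1 + \binom{m+1}{3}$ of \Cref{bigrassprop}: subtracting $a(n)$ from $1 + \binom{n+1}{3}$ gives the number of even biGrassmannian permutations of size $n$, and a short algebraic check (splitting on the parity of $n$) should confirm this equals $a(n-2)$.

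The remaining care is bookkeeping: one must confirm that the identity permutation itself is even (it has zero inversions) so that it is never counted among the odd words, and that the substitution genuinely lands in the non-identity words of the smaller size, exactly as established in \Cref{bigrass}. The parity-shift computation for each of the four forms is routine once the inversion formula is applied, so the heart of the argument is the even-odd size-shift identity for $a(\cdot)$; establishing that cleanly is the step I would spend the most effort on.
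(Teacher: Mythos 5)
Your first two cases are fine and agree with the paper: for $m \leq k$ every biGrassmannian permutation avoids $\operatorname{id}_k$, and for $(m-k)$ even the shift by $t = m-k$ preserves parity (the inversion count of each word type is a product of two of the exponents, say $bc$, and $(b-t)(c-t) \equiv bc \pmod 2$ when $t$ is even). The gap is in the case $(m-k)$ odd, and it is a real one, not a bookkeeping issue. With $t$ odd, the change $(b-t)(c-t) - bc = t(t - b - c)$ has parity depending on $b + c$, so the map is \emph{not} globally parity-reversing: it sends odd words (which have $b, c$ both odd) to even words, but it also sends even words with exactly one even exponent to even words. Hence the odd words of size $m$ land in a \emph{proper subset} of the even non-identity words of size $2k-m$, and the two counts do not agree. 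Concretely, take $k = 5$, $m = 6$: the only odd biGrassmannian permutation of size $6$ avoiding $\operatorname{id}_5$ is $G(111000) = 456123$ (with $9$ inversions), so the answer is $1 = a(2)$, whereas the number of even non-identity biGrassmannian permutations of size $2k - m = 4$ is $1 + \binom{5}{3} - a(4) - 1 = 5$. The same example refutes your proposed auxiliary identity that the even biGrassmannian permutations of size $n$ are counted by $a(n-2)$ (already at $n = 3$ there are $3$ even ones, including the identity, while $a(1) = 0$), so the step you flagged as the heart of your argument cannot be carried out.

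The paper's fix is different and worth noting: in the odd case one does not use the shift $t = m - k$ at all, but instead subtracts the \emph{even} amount $m - k + 1$ from the two designated exponents (the paper writes $m-k-1$, which appears to be a typo since it yields length $2k-m+2$ rather than $2k-m-2$). Because the shift is even, parity is preserved, and because an odd word has both relevant exponents odd, the shifted exponents are still odd and in particular nonzero, so the image is a genuine non-identity word of the correct type. This gives a direct parity-preserving bijection between the odd words of size $m$ avoiding $\operatorname{id}_k$ and the odd biGrassmannian permutations of size $2k - m - 2$, with no auxiliary even/odd identity needed. If you want to salvage your outline, replace the shift in the odd case accordingly and verify, type by type, that the modified map is a bijection onto the odd words of size $2k-m-2$.
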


\begin{proof}
If $m \leq k$, then all odd biGrassmannian permutations avoid $\operatorname{id}_k$. 
Also, if $m \geq 2k$, then all odd biGrassmannian permutations contain $\operatorname{id}_k$. 
The proof for the other two cases are similar to the proof of \Cref{bigrass}. 
For example, suppose that $m > k$, $(m - k)$ is odd, and
\begin{equation*}
    0^a 1^b 0^c 1^d
\end{equation*}
is an odd biGrassmannian permutation avoiding $\operatorname{id}_k$. 
The Grassmannian permutation associated to the binary word
\begin{equation*}
    0^a 1^{b - (m - k - 1)} 0^{c - (m - k - 1)} 1^d
\end{equation*}
is an odd biGrassmannian permutation of length $(2k - m - 2)$. 
Similar operations on other odd biGrassmannian permutations of size $m$ avoiding $\operatorname{id}_k$ show that they correspond to odd biGrassmannian permutations of size $(2k - m - 2)$. 
A similar technique works for proving the result when $m > k$ and $(m - k)$ is even.
\end{proof}

\subsection{Grassmannian involutions}

These are Grassmannian permutations $\pi$ that satisfy $\pi^{-1} = \pi$. 
Rewriting a result from \cite{grass} in terms of binary words, we have the following characterization of Grassmannian involutions.

\begin{proposition}[{\cite[Proposition 2.3]{grass}}]
    The Grassmannian involutions are those of the form $G(0^{k_1}1^{k_2}0^{k_2}1^{k_3})$ for some $k_1, k_2, k_3 \geq 0$ and the number of those of size $m$ is
    \begin{equation*}
        \left\lceil \frac{m^2 + 1}{4} \right\rceil.
    \end{equation*}
\end{proposition}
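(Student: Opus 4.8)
The plan is to prove the characterization first and then deduce the count from it. For the ``if'' direction I would compute $G(0^{k_1}1^{k_2}0^{k_2}1^{k_3})$ straight from the definition of $G$. With $k=k_1+k_2$ zeros, the first $k$ values of the permutation are $1,\dots,k_1,k_1+k_2+1,\dots,k_1+2k_2$ and the last $k_2+k_3$ values are $k_1+1,\dots,k_1+k_2,k_1+2k_2+1,\dots,n$. Reading this off shows $\pi$ fixes $1,\dots,k_1$, swaps $k_1+j\leftrightarrow k_1+k_2+j$ for $j\in[k_2]$, and fixes $k_1+2k_2+1,\dots,n$; being a product of $k_2$ disjoint transpositions and fixed points, it is an involution.

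For the converse, let $\pi=G(w)$ be a Grassmannian involution and write $k$ for the number of zeros in $w$. Two facts fall out of the definition of $G$: the map $\pi$ is increasing on $[1,k]$ and on $[k+1,n]$, and moreover $\pi(i)\ge i$ on $[1,k]$ while $\pi(i)\le i$ on $[k+1,n]$ (the $i$-th smallest zero-position is at least $i$, and symmetrically for ones). Hence every nontrivial transposition $(i,j)$ with $i<j$ has $i\in[1,k]$ (an ``up'' position) and $j\in[k+1,n]$ (a ``down'' position), and no transposition can sit inside a single block. The key step is then to exploit monotonicity on each block: within $[1,k]$ a fixed point cannot follow an up-endpoint (else $\pi$ would decrease there), so the fixed points $1,\dots,k_1$ precede the up-endpoints $k_1+1,\dots,k_1+k_2$; symmetrically within $[k+1,n]$ the down-endpoints $k+1,\dots,k+k_2$ precede the trailing fixed points. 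Since a position is a zero of $w$ exactly when its image under $\pi=\pi^{-1}$ is $\le k$, reading off $w$ from this cycle structure yields $w=0^{k_1}1^{k_2}0^{k_2}1^{k_3}$. (Alternatively, since $\pi^{-1}=\pi$ is Grassmannian, $\pi$ is biGrassmannian, so \Cref{bigrassprop} already forces $w=0^a1^b0^c1^d$, after which imposing $\pi\circ\pi=\mathrm{id}$ collapses $b=c$.)

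For the count I would group the admissible words $0^{k_1}1^{k_2}0^{k_2}1^{k_3}$ of length $m=k_1+2k_2+k_3$ according to $k_2$. When $k_2\ge 1$ the permutation is non-identity, so distinct words give distinct permutations, and for each such $k_2$ (with $2k_2\le m$) there are $m-2k_2+1$ choices of $(k_1,k_3)$. The words with $k_2=0$ all yield the identity, contributing a single permutation. Summing gives $1+\sum_{k_2=1}^{M}(m-2k_2+1)=1+M(m-M)$ with $M=\lfloor m/2\rfloor$, and a one-line parity check ($1+s^2$ when $m=2s$, and $1+s(s+1)$ when $m=2s+1$) identifies this with $\lceil (m^2+1)/4\rceil$.

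The main obstacle is the converse of the characterization: arguing that the single-descent constraint forces the fixed points and the transposition endpoints to occur in sorted blocks within $[1,k]$ and within $[k+1,n]$. Once that sorting is established the word form and the enumeration are routine. The only real subtlety in the count is that all $k_2=0$ words collapse onto the identity, so that family must be counted once rather than $m+1$ times.
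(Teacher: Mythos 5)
Your argument is correct. Note that the paper itself does not prove this proposition at all: it is imported verbatim from Gil and Tomasko (\cite[Proposition 2.3]{grass}) and merely restated in the language of binary words, so there is no in-paper proof to compare against. Your self-contained derivation is sound on all counts: the forward computation of $G(0^{k_1}1^{k_2}0^{k_2}1^{k_3})$ as $k_2$ disjoint transpositions $k_1+j \leftrightarrow k_1+k_2+j$ plus fixed points is right; the converse correctly uses that $\pi$ is increasing on $[1,k]$ and on $[k+1,n]$ with $\pi(i)\ge i$ on the first block and $\pi(i)\le i$ on the second, which forces each $2$-cycle to straddle the blocks and forces the fixed points and transposition endpoints into sorted runs (the ``fixed point after an up-endpoint'' contradiction works because the up-endpoint's image lies in $[k+1,n]$ while the fixed point's image is at most $k$); and the identification $w_i=0 \iff \pi(i)\le k$ via $\pi^{-1}=\pi$ then reads off the word shape. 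The shortcut via \Cref{bigrassprop} (biGrassmannian forces $w=0^a1^b0^c1^d$, then $\pi^2=\operatorname{id}$ forces $b=c$) is an equally valid route and is closer in spirit to how the paper organizes Section 5. The count is also handled correctly, including the one genuine subtlety, namely that all words with $k_2=0$ collapse to the identity and must be counted once; the closed form $1+\lfloor m/2\rfloor\cdot\lceil m/2\rceil=\lceil (m^2+1)/4\rceil$ checks out in both parities.
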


The following result can be proved using a similar bijection to the one described in the proof of \Cref{bigrass}.

\begin{theorem}
    For $k \leq m < 2k$, the number of Grassmannian involutions of size $m$ that avoid $\operatorname{id}_k$ is
    % \begin{equation*}
    %     \left\lceil \frac{m^2 + 1}{4} \right\rceil
    % \end{equation*}
    % if $m < k$ and when  it is
    \begin{equation*}
        \left\lfloor \frac{(2k - m)^2}{4} \right\rfloor.
    \end{equation*}
\end{theorem}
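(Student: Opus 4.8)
The plan is to run the bijective argument from the proof of \Cref{bigrass}, working throughout in the binary-word model. By the preceding proposition, every Grassmannian involution of size $m$ is $G(w)$ for a word $w = 0^{k_1}1^{k_2}0^{k_2}1^{k_3}$ with $k_1,k_2,k_3 \ge 0$ and $k_1 + 2k_2 + k_3 = m$. Such a word represents the identity precisely when $k_2 = 0$, while for $k_2 \ge 1$ the permutation $G(w)$ is non-identity, so its word is uniquely determined and the run decomposition recovers the triple $(k_1,k_2,k_3)$. Since $k \le m$, the identity $\operatorname{id}_m$ contains $\operatorname{id}_k$ and so must be discarded. Thus it suffices to count triples $(k_1,k_2,k_3)$ with $k_2 \ge 1$ and $k_1 + 2k_2 + k_3 = m$ whose word avoids $\operatorname{id}_k$.

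Next I would pin down the avoidance condition. By \Cref{bincont}, $G(w)$ avoids $\operatorname{id}_k = G(0^k)$ if and only if $w$ avoids $0^j1^{k-j}$ for every $j \in [0,k]$, and this holds exactly when the longest subsequence of $w$ of the form $0^a1^b$ has total length less than $k$. The key computation is that this maximal length equals $k_1 + k_2 + k_3$: taking the zeros only from the initial block leaves all $k_2 + k_3$ ones available afterwards, whereas taking the zeros up to and including the middle $0$-block leaves only the $k_3$ trailing ones; both options yield $k_1 + k_2 + k_3$, and no choice does better. Hence avoidance is equivalent to $k_1 + k_2 + k_3 \le k - 1$, which, using $k_1 + 2k_2 + k_3 = m$, rearranges to $k_2 \ge m - k + 1$.

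With the constraints now reading $k_2 \ge m-k+1$ and $k_1 + k_3 = m - 2k_2 \ge 0$, I would define the map that shrinks the repeated middle block by $m-k$, namely $0^{k_1}1^{k_2}0^{k_2}1^{k_3} \mapsto 0^{k_1}1^{k_2-(m-k)}0^{k_2-(m-k)}1^{k_3}$. Because $k_2 \ge m-k+1$, the new middle length is at least $1$, so the image is a non-identity involution word, and its size is $m - 2(m-k) = 2k-m$. The inverse simply enlarges the middle block by $m-k$, so this is a bijection onto the non-identity Grassmannian involutions of size $2k-m$. Therefore the count we want equals the number of non-identity Grassmannian involutions of size $2k-m$; invoking the preceding proposition, which gives $\left\lceil \frac{(2k-m)^2 + 1}{4} \right\rceil$ involutions of that size, and subtracting the single identity yields $\left\lceil \frac{(2k-m)^2+1}{4} \right\rceil - 1 = \left\lfloor \frac{(2k-m)^2}{4} \right\rfloor$, as claimed.

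The block-shrinking bijection is routine and parallels \Cref{bigrass} almost verbatim, so I expect the main obstacle to be the avoidance characterization in the second paragraph: one must verify that the longest weakly increasing ($0^a1^b$) subsequence has length $k_1 + k_2 + k_3$ irrespective of which blocks contribute, and one must remember to exclude the identity, which in this regime $m \ge k$ always contains $\operatorname{id}_k$ (unlike the $m < k$ case). The remaining simplification is elementary arithmetic, with the floor/ceiling bookkeeping controlled only by the parity of $2k-m$.
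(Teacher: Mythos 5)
Your proof is correct and takes essentially the approach the paper intends: the paper gives no written proof here beyond pointing to the block-shrinking bijection of the biGrassmannian case, and your argument (characterizing avoidance as $k_2 \ge m-k+1$ via the longest $0^a1^b$ subsequence, then shrinking the repeated middle block by $m-k$ to biject with non-identity involutions of size $2k-m$) is exactly that bijection carried out in full.
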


Just as we did for biGrassmannian permutations, we now study odd Grassmannian involutions avoiding $\operatorname{id}_k$.

\begin{proposition}
    Set $b(m)$ to be the number of odd Grassmannian involutions of size $m$. 
    Then,
    \begin{equation*}
        b(m) = \left\lfloor \frac{(m + 1)^2}{8} \right\rfloor.
    \end{equation*}
\end{proposition}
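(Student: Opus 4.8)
The plan is to combine the characterization of Grassmannian involutions as the permutations $G(0^{k_1}1^{k_2}0^{k_2}1^{k_3})$ with the earlier formula for the number of inversions of $G(w)$, and then to carry out a direct count of those that are odd.

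First I would pin down the parity of each Grassmannian involution. For a word $w = 0^{k_1}1^{k_2}0^{k_2}1^{k_3}$, the number of inversions of $G(w)$ is the number of occurrences of $10$ as a subsequence of $w$, that is, for each $0$ we count the $1$s preceding it. The leading $k_1$ zeros contribute nothing, and each of the $k_2$ zeros in the middle block is preceded by exactly the $k_2$ ones of the first $1$-block, while the trailing $1$s contribute nothing. Hence $G(w)$ has exactly $k_2^2$ inversions, so $G(w)$ is odd if and only if $k_2$ is odd.

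Next I would count. The size of $G(0^{k_1}1^{k_2}0^{k_2}1^{k_3})$ is $k_1 + 2k_2 + k_3$, and since distinct triples with $k_2 \geq 1$ give distinct binary words (the string rigidly determines $k_1$, then $k_2$, then $k_3$), the uniqueness of the representation for non-identity permutations makes this parametrization injective once $k_2 \geq 1$. Therefore the odd Grassmannian involutions of size $m$ are in bijection with triples $(k_1, k_2, k_3)$ of nonnegative integers with $k_2$ odd and $k_1 + 2k_2 + k_3 = m$. Writing $k_2 = 2t + 1$, for each admissible $t$ the number of pairs $(k_1, k_3)$ with $k_1 + k_3 = m - 4t - 2 \geq 0$ is $m - 4t - 1$, which gives
\[
  b(m) = \sum_{t = 0}^{\lfloor (m - 2)/4 \rfloor} (m - 1 - 4t) = (T + 1)(m - 1 - 2T), \qquad T = \Big\lfloor \tfrac{m-2}{4} \Big\rfloor .
\]

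The final step, and the only genuinely computational one, is to verify that this closed form equals $\lfloor (m+1)^2/8 \rfloor$. I would do this by splitting into the four residue classes of $m$ modulo $4$, substituting $m \in \{4s, 4s+1, 4s+2, 4s+3\}$ to evaluate $T$ exactly, expanding the product $(T+1)(m-1-2T)$, and comparing with $\lfloor (m+1)^2/8 \rfloor$ in each class; I would also record the degenerate cases $m \leq 1$, where the sum is empty and both sides vanish. I do not expect conceptual difficulty here: the key insight is the identity (number of inversions) $= k_2^2$, and the main obstacle is merely the bookkeeping of the floor functions across the four cases.
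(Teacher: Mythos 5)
Your proof is correct, and it takes a genuinely different route from the paper. The paper lists the binary words of odd Grassmannian involutions in four shapes (according to whether the outer blocks are empty), with the parity condition placed on the repeated middle block, and then establishes the recursion $b(m) = b(m-4) + m - 1$ for $m \geq 5$ by a bijection that shrinks the middle blocks $1^b0^b$ to $1^{b-2}0^{b-2}$; the closed form then follows from the base cases $b(m) = m-1$ for $m \leq 4$. You instead observe directly that $G(0^{k_1}1^{k_2}0^{k_2}1^{k_3})$ has exactly $k_2^2$ inversions, so oddness is equivalent to $k_2$ being odd, and then sum over the triples $(k_1,k_2,k_3)$ with $k_2$ odd and $k_1 + 2k_2 + k_3 = m$. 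Your injectivity argument for the parametrization (read off $k_1$ as the initial $0$-run, then $k_2$, then $k_3$, once $k_2 \geq 1$) is sound, the sum $\sum_{t=0}^{\lfloor (m-2)/4\rfloor}(m-1-4t) = (T+1)(m-1-2T)$ is right, and the four-residue-class check against $\lfloor (m+1)^2/8\rfloor$ does go through (e.g.\ for $m = 4s$ both sides equal $2s^2+s$, for $m = 4s+3$ both equal $2(s+1)^2$). What your approach buys is a self-contained closed-form derivation with the clean structural fact $\operatorname{inv} = k_2^2$ made explicit; what the paper's recursion buys is less floor-function bookkeeping and a template that it reuses verbatim for the analogous biGrassmannian count. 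Both proofs rest on the same characterization of Grassmannian involutions, so neither needs machinery the other avoids.
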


\begin{proof}
For $m \leq 4$, direct calculations show that $b(m) = m - 1$. 
We now show that for $m \geq 5$, we have
\begin{equation*}
    b(m) = b(m - 4) + m - 1.
\end{equation*}
This will prove the required result.

The binary words corresponding to odd Grassmannian involutions of size $m$ are of the following forms:
\begin{enumerate}[label=(\roman*)]
    \item $0^a 1^b 0^b$ where $a, b \geq 1$ and $b$ is odd.
    
    \item $0^a 1^b 0^b 1^c$ where $a, b, c \geq 1$ and $b$ is odd.
    
    \item $1^a 0 ^a$ where $a \geq 1$ is odd.
    
    \item $1^a 0^a 1^b$ where $a, b \geq 1$ and $a$ is odd.
\end{enumerate}
In words of the first two types, if $b \neq 1$, replace $b$ by $(b - 2)$. 
Similarly, in words of the last two types, if $a \neq 1$, replace $a$ by $(a - 2)$. 
This gives a bijection between such words and odd Grassmannian involutions of size $(m - 4)$. 
The number of remaining binary words is $(m - 1)$. 
This proves the required recursion.
\end{proof}

Using the same ideas as in the proof of \Cref{bigrassparity}, we have the following result.

\begin{theorem}
    The number of odd Grassmannian involutions of size $m$ that avoid the $\operatorname{id}_k$ is
    \begin{align*}
        b(m), &\text{ if $m \leq k$,}\\
        b(2k - m), &\text{ if $m > k$ and $(m - k)$ is even, and}\\
        b(2k - m - 2), &\text{ if $m > k$ and $(m - k)$ is odd.}
    \end{align*}
\end{theorem}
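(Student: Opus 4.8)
The plan is to mirror the proof of \Cref{bigrassparity}, working throughout with the binary-word description. By the characterization quoted just above, every Grassmannian involution of size $m$ is $G(w)$ for a unique word $w = 0^{k_1}1^{k_2}0^{k_2}1^{k_3}$ with $k_1,k_2,k_3\geq 0$ and $k_1+2k_2+k_3=m$. Two facts will drive everything. First, the number of inversions of $G(w)$ equals the number of $10$-subsequences in $w$; in this word only the $k_2$ ones of the middle block precede the $k_2$ zeros of the middle block, so there are exactly $k_2^2$ such subsequences, and hence $G(w)$ is \emph{odd} if and only if $k_2$ is odd (in particular $k_2\geq 1$). Second, by \Cref{bincont} the word avoids $\operatorname{id}_k$ precisely when it contains no subsequence $0^j1^{k-j}$, i.e.\ when the maximum of $j+i$ over all subsequences $0^j1^i$ is at most $k-1$; a short case check on where the ``split'' between the chosen $0$s and $1$s occurs shows this maximum equals $k_1+k_2+k_3=m-k_2$. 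Thus an odd Grassmannian involution of size $m$ avoids $\operatorname{id}_k$ if and only if $k_2$ is odd and $k_1+k_2+k_3\leq k-1$, the latter being equivalent to $k_2\geq m-k+1$.

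With this dictionary the extreme cases are immediate. If $m\leq k$, then since $k_2\geq 1$ the longest ``$0$s then $1$s'' subsequence has length $m-k_2\leq m-1\leq k-1$, so every odd involution of size $m$ avoids $\operatorname{id}_k$, giving $b(m)$. If $m\geq 2k$, then $k_2\leq m/2$ forces $m-k_2\geq m/2\geq k$, so every odd involution contains $\operatorname{id}_k$ and the count is $0$ (consistent with the stated formula under the convention that $b$ vanishes on nonpositive arguments in this range).

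For $k<m<2k$ I would build a size-reducing bijection exactly as in \Cref{bigrass}, acting only on the doubled middle block. Write $d=m-k$. When $d$ is even, send $0^{k_1}1^{k_2}0^{k_2}1^{k_3}$ to $0^{k_1}1^{k_2-d}0^{k_2-d}1^{k_3}$; when $d$ is odd, subtract $d+1$ from $k_2$ instead. In either case the amount removed from $k_2$ is even, so $k_2$ and its image share parity and oddness is preserved; since $k_2\geq m-k+1>d$, the new middle exponent stays positive, and being odd it is in fact $\geq 1$. The new size is $m-2d=2k-m$ in the even case and $m-2(d+1)=2k-m-2$ in the odd case, and the image is an \emph{arbitrary} odd Grassmannian involution of that size (there is no avoidance constraint on the smaller word). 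The inverse map adds back the same even amount to the middle block; writing the smaller involution as $0^{k_1}1^{k_2'}0^{k_2'}1^{k_3}$, a one-line computation gives its image a longest increasing subsequence of length $k-k_2'\leq k-1$ in the even case and $k-k_2'-1\leq k-1$ in the odd case, so the image indeed avoids $\operatorname{id}_k$ and has size $m$. As the two maps are visibly mutually inverse, this settles the two remaining cases.

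The only genuine content lies in the third paragraph: one must choose the amount subtracted from $k_2$ to be even, which is exactly what forces the split between subtracting $d$ and subtracting $d+1$ according to the parity of $m-k$, since this is precisely what preserves oddness. After that choice is pinned down, the verifications that every exponent stays nonnegative (using $k_2\geq m-k+1$) and that the inequality $k_1+k_2+k_3\leq k-1$ transfers correctly across the bijection are routine, being a direct transcription of the computations already carried out for \Cref{bigrass} and \Cref{bigrassparity}.
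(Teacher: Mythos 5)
Your proposal is correct and follows essentially the same route as the paper, which proves this theorem by transporting the size-reducing bijection from the biGrassmannian case (\Cref{bigrassparity}) to the canonical words $0^{k_1}1^{k_2}0^{k_2}1^{k_3}$; your explicit reductions (oddness $\Leftrightarrow$ $k_2$ odd via the inversion count $k_2^2$, avoidance $\Leftrightarrow$ $k_2 \geq m-k+1$, and subtracting the even amount $m-k$ or $m-k+1$ from the doubled middle exponent) are exactly the intended details.
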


\section{Concluding remarks}

It would be interesting to see if \Cref{count} (or equivalently, \Cref{dycksumpeak}) and \Cref{balascatlem} can be proved directly, possibly by the Principle of Inclusion-Exclusion, instead of using recursions. 
We list some particular cases of these identities which might be easier to tackle than the general results.

\begin{enumerate}[label=(\roman*)]
    \item For $0 \leq m < k$, we have
    \begin{equation*}
        \sum_{j = 1}^{k}(-1)^{j - 1} j \cdot \binom{2k - m - j}{j} \cdot C_{k - j} = 2^m.
    \end{equation*}
    
    \item For $k \geq 1$, we have
    \begin{equation*}
        \sum_{j = 1}^{k}(-1)^{j - 1} j \cdot \binom{k - j}{j} \cdot C_{k - j} = 2^k - k - 1.
    \end{equation*}
\end{enumerate}

\section{Acknowledgements}
We would like to thank Michael Weiner for his comments on an earlier version of this paper. 
We would also like to thank the referees for their careful reading and helpful suggestions. 
The computer algebra system SageMath \cite{Sage} provided valuable assistance in studying examples. 
This work was done during the first author's visit to the Indian Institute of Technology Bhilai, funded by SERB, India, via the project  AV/VRI/2022/0140. Krishna Menon is partially supported by a grant from the Infosys Foundation, and Anurag Singh is partially supported by the Research Initiation Grant from the Indian Institute of Technology Bhilai (No. 2009301).

\bibliographystyle{abbrv}

\begin{thebibliography}{10}
	
	\bibitem{TE2018}
	T.~Alland and E.~Richmond.
	\newblock Pattern avoidance and fiber bundle structures on {S}chubert
	varieties.
	\newblock {\em J. Combin. Theory Ser. A}, 154:533--550, 2018.
	
	\bibitem{genforests}
	K.~Anders and K.~Archer.
	\newblock Rooted forests that avoid sets of permutations.
	\newblock {\em European J. Combin.}, 77:1--16, 2019.
	
	\bibitem{aval2008multivariate}
	J.-C. Aval.
	\newblock Multivariate {F}uss-{C}atalan numbers.
	\newblock {\em Discrete Math.}, 308(20):4660--4669, 2008.
	
	\bibitem{dyckpaths}
	A.~Bernini, L.~Ferrari, R.~Pinzani, and J.~West.
	\newblock Pattern-avoiding {D}yck paths.
	\newblock In {\em 25th {I}nternational {C}onference on {F}ormal {P}ower
		{S}eries and {A}lgebraic {C}ombinatorics ({FPSAC} 2013)}, Discrete Math.
	Theor. Comput. Sci. Proc., AS, pages 683--694. Assoc. Discrete Math. Theor.
	Comput. Sci., Nancy, 2013.
	
	\bibitem{sara}
	S.~C. Billey and J.~E. Weaver.
	\newblock A pattern avoidance characterization for smoothness of positroid
	varieties.
	\newblock {\em S\'{e}m. Lothar. Combin.}, 86B:Art. 57, 12, 2022.
	
	\bibitem{4587979}
	A.~Burstein.
	\newblock Number of balanced bracket sequences with given prefix and suffix.
	\newblock Mathematics Stack Exchange.
	\newblock URL:https://math.stackexchange.com/q/4587979 (version: 2022-12-03).
	
	\bibitem{callan}
	D.~Callan.
	\newblock Pattern avoidance in circular permutations.
	\newblock {\em arXiv preprint math/0210014}, 2002.
	
	\bibitem{bintree2}
	M.~Dairyko, S.~Tyner, L.~Pudwell, and C.~Wynn.
	\newblock Non-contiguous pattern avoidance in binary trees.
	\newblock {\em Electron. J. Combin.}, 19(3):Paper 22, 21, 2012.
	
	\bibitem{JZJ2022}
	J.~J. Fang, Z.~Hamaker, and J.~M. Troyka.
	\newblock On pattern avoidance in matchings and involutions.
	\newblock {\em Electron. J. Combin.}, 29(1):Paper No. 1.39, 22, 2022.
	
	\bibitem{Christian20225}
	C.~Gaetz.
	\newblock Spherical {S}chubert varieties and pattern avoidance.
	\newblock {\em Selecta Math. (N.S.)}, 28(2):Paper No. 44, 9, 2022.
	
	\bibitem{forests}
	S.~Garg and A.~Peng.
	\newblock Classical and consecutive pattern avoidance in rooted forests.
	\newblock {\em J. Combin. Theory Ser. A}, 194:Paper No. 105699, 52, 2023.
	
	\bibitem{grass}
	J.~B. Gil and J.~A. Tomasko.
	\newblock Restricted grassmannian permutations.
	\newblock {\em Enumer. Combin. Appl.}, 2(S4PP6), 2022.
	
	\bibitem{grassodd}
	J.~B. Gil and J.~A. Tomasko.
	\newblock Pattern-avoiding even and odd {G}rassmannian permutations.
	\newblock {\em Australas. J. Combin.}, 86:187--205, 2023.
	
	\bibitem{setavoidance2}
	A.~M. Goyt.
	\newblock Avoidance of partitions of a three-element set.
	\newblock {\em Adv. in Appl. Math.}, 41(1):95--114, 2008.
	
	\bibitem{mansourbook}
	S.~Heubach and T.~Mansour.
	\newblock {\em Combinatorics of compositions and words}.
	\newblock Discrete Mathematics and its Applications (Boca Raton). CRC Press,
	Boca Raton, FL, 2010.
	
	\bibitem{Hilton1991CatalanNT}
	P.~Hilton and J.~Pedersen.
	\newblock Catalan numbers, their generalization, and their uses.
	\newblock {\em Math. Intelligencer}, 13(2):64--75, 1991.
	
	\bibitem{setavoidance1}
	M.~Klazar.
	\newblock On {$abab$}-free and {$abba$}-free set partitions.
	\newblock {\em European J. Combin.}, 17(1):53--68, 1996.
	
	\bibitem{knu}
	D.~E. Knuth.
	\newblock {\em The art of computer programming. {V}olume 3}.
	\newblock Addison-Wesley Series in Computer Science and Information Processing.
	Addison-Wesley Publishing Co., Reading, Mass.-London-Don Mills, 1973.
	\newblock Sorting and searching.
	
	\bibitem{lang}
	W.~Lang.
	\newblock On polynomials related to powers of the generating function of
	{C}atalan's numbers.
	\newblock {\em Fibonacci Quart.}, 38(5):408--419, 2000.
	
	\bibitem{4k}
	K.~Menon and A.~Singh.
	\newblock Pattern avoidance of {$[4,k]$}-pairs in circular permutations.
	\newblock {\em Adv. in Appl. Math.}, 138:Paper No. 102346, 51, 2022.
	
	\bibitem{bintree}
	E.~S. Rowland.
	\newblock Pattern avoidance in binary trees.
	\newblock {\em J. Combin. Theory Ser. A}, 117(6):741--758, 2010.
	
	\bibitem{SimSch}
	R.~Simion and F.~W. Schmidt.
	\newblock Restricted permutations.
	\newblock {\em European J. Combin.}, 6(4):383--406, 1985.
	
	\bibitem{oeis}
	N.~J.~A. Sloane.
	\newblock The {O}n-{L}ine {E}ncyclopedia of {I}nteger {S}equences.
	\href{http://oeis.org}{http://oeis.org}.
	
	\bibitem{sta_cat}
	R.~P. Stanley.
	\newblock {\em Catalan numbers}.
	\newblock Cambridge University Press, 2015.
	
	\bibitem{Sage}
	{The Sage Developers}.
	\newblock {\em {S}ageMath, the {S}age {M}athematics {S}oftware {S}ystem
		({V}ersion 9.5.0)}, 2022.
	\newblock {\tt https://www.sagemath.org}.
	
	\bibitem{vella}
	A.~{Vella}.
	\newblock {Pattern avoidance in permutations: Linear and cyclic orders}.
	\newblock {\em {Electron. J. Comb.}}, 9(2):research paper r18, 43, 2003.
	
	\bibitem{WooYong}
	A.~Woo and A.~Yong.
	\newblock When is a {S}chubert variety {G}orenstein?
	\newblock {\em Adv. Math.}, 207(1):205--220, 2006.
	
\end{thebibliography}

\end{document}